\newif\ifcsl
\newif\ifmargin
\newif\ifmarginB
\newif\ifskip
\numberwithin{equation}{section} 
\numberwithin{figure}{section} 
\theoremstyle{plain}
\newenvironment{renumerate}{\begin{enumerate}}{\end{enumerate}}
\newtheorem{thm}{Theorem}
  \newtheorem{lem}[thm]{Lemma}
  \newtheorem{prop}[thm]{Proposition}
  \newtheorem{cor}[thm]{Corollary}
  \newtheorem{ex}[thm]{Example}
\begin{document}
\global\long\def\mc#1{\mathcal{#1}}
\global\long\def\angl#1{\left\langle #1\right\rangle }

\newcommand{\N}{\mathbb{N}}
\newcommand{\Z}{\mathbb{Z}}
\newcommand{\FOL}{\textsc{FOL}}
\newcommand{\SOL}{\textsc{SOL}}
\newcommand{\MSOL}{\textsc{MSOL}}
\newcommand{\TVOIMSOL}{\textsc{TVMSOL}}
\newcommand{\TVOISOL}{\textsc{TVSOL}}
\newcommand{\TVOI}{\textsc{TV}}

\title{Definability of Combinatorial Functions and Their Linear Recurrence Relations\\ \ \\
	{\normalsize Extended Abstract}}
\author{T. Kotek
\thanks{
Partially supported by a grant of the Graduate School of the Technion--Israel Institute of Technology}
\and
        J.A. Makowsky
\thanks{Partially supported by a grant of the Fund for
Promotion of Research of the Technion--Israel Institute of
Technology and grant ISF 1392/07 of the Israel Science Foundation (2007-2010)}
}
\institute{Department of Computer Science\\
Technion--Israel Institute of Technology\\
32000 Haifa, Israel\\
e-mail: \{tkotek,janos\}@cs.technion.ac.il
}
\maketitle
\begin{abstract}
\ifmargin
\marginpar{Unchanged}
\fi
We consider functions of natural numbers 
which allow a combinatorial interpretation as density functions (speed) of classes of relational structures, such as Fibonacci 
numbers, Bell numbers, Catalan numbers and the like.
Many of these functions satisfy a linear recurrence relation over 
$\mathbb Z$ or ${\mathbb Z}_m$ and allow an interpretation as counting the 
number of relations satisfying a property expressible 
in Monadic Second Order Logic (MSOL).

C. Blatter and E. Specker (1981) showed that
if such a function $f$  counts the number of binary
relations satisfying a property expressible in MSOL
then $f$ satisfies for every $m \in \mathbb{N}$
a linear recurrence relation over $\mathbb{Z}_m$.

In this paper we give a complete characterization in terms of definability
in MSOL of the combinatorial functions which satisfy a linear
recurrence relation over $\mathbb{Z}$, and discuss various
extensions and limitations of the Specker-Blatter theorem.
\end{abstract}

 \section{Introduction}
\label{intro}
\label{se:intro}

\subsection{The Speed of a Class of Finite Relational Structures}
Let $\mathcal{P}$ be a graph property,
and $\mathcal{P}^n$ be the set of graphs with vertex set $[n]$.
We denote by $sp_{\mathcal{P}}(n)= |\mathcal{P}^n|$ the number of labeled graphs in $\mathcal{P}^n$.
The function
$sp_{\mathcal{P}}(n)$ is called the 
{\em speed of $\mathcal{P}$},
or in earlier literature the 
{\em density of $\mathcal{P}$}.
Instead of graph properties we also study classes of finite relational structures $\mathcal{K}$
with relations $R_i: i = 1, \ldots, s$ of arity $\rho_i$.
For the case of $s=1$ and $\rho_1=1$ such classes can be identified with
binary words over the positions $1, \ldots, n$.

The study of the function
$sp_{\mathcal{K}}(n)$ has a rich literature concentrating on two types of behaviours of
the sequence
$sp_{\mathcal{K}}(n)$: 
\begin{itemize}
\item
Recurrence relations
\item
Growth rate
\end{itemize}
Clearly, the existence of recurrence relations limits the growth rate.

\begin{renumerate}
\item
In formal language theory it was studied in  N. Chomsky and M.P. Schuetzenberger
\cite{ar:ChomskySchuetzenberger}
who proved that for $\mathcal{K} =L$, a regular language, the sequence
$sp_{L}(n)$  satisfies a linear recurrence 
relation over $\mathbb{Z}$. This implies that
the formal power series
$\sum_n sp_{L}(n) X^n$  is rational.
The paper \cite{ar:ChomskySchuetzenberger} initiated the field Formal Languages and Formal Power Series.

Furthermore, it is known that $L$ is regular iff $L$ is definable in
Monadic Second Order Logic $\MSOL$, \cite{ar:buechi60}.

\item
In C. Blatter and E. Specker \cite{pr:BlatterSpecker81} the case of $\mathcal{K}$ was studied, where
$\rho_i \leq 2$ for all $i \leq s$ and $\mathcal{K}$ definable in $\MSOL$.
They showed that in this case for every $m \in \mathbb{N}$, 
the sequence $sp_{\mathcal{K}}(n)$  
is ultimately periodic modulo $m$, or equivalently,
that 
the sequence $sp_{\mathcal{K}}(n)$  
satisfies a linear recurrence relation over $\mathbb{Z}_m$.

\item
In E.R. Scheinerman and J. Zito \cite{ar:SZ94} the function
$sp_{\mathcal{P}}(n)$ was studied for {\em hereditary} graph properties $\mathcal{P}$,
i.e., graph properties closed under induced subgraphs.
They were interested in the growth properties of
$sp_{\mathcal{P}}(n)$. The topic was further developed by J. Balogh, B. Bollobas and D. Weinreich
in a sequence of papers, \cite{ar:BT95,ar:BBW00,ar:BBW01}, which showed that only six classes of growth
of
$sp_{\mathcal{P}}(n)$ are possible, roughly speaking, constant, polynomial, or exponential growth,
or growth in one of three
factorial ranges.
They also obtained similar results for monotone graph properties, i.e., graph properties
closed under subgraphs, \cite{ar:BBW02}. Early precursors of the study of
$sp_{\mathcal{P}}(n)$ for monotone graph properties is \cite{ar:EFR86},
and for hereditary graph properties, \cite{ar:PS92}.

We note that hereditary (monotone) graph properties $\mathcal{P}$ are characterized by a countable set 
$IForb(\mathcal{P})$ 
($SForb(\mathcal{P})$)
of forbidden induced subgraphs (subgraphs).
In the case that 
$IForb(\mathcal{P})$  is finite, $\mathcal{P}$ is definable in First Order Logic, $\FOL$,
and in the case that
$IForb(\mathcal{P})$
is  $\MSOL$-definable, also $\mathcal{P}$ is
$\MSOL$-definable.
The same holds also for monotone properties.

\item
The classification of the growth rate of
$sp_{\mathcal{P}}(n)$ was extended to minor-closed classes in
\cite{arXiv:BNW07}. 
We note that minor-closed classes $\mathcal{P}$ are always $\MSOL$-definable. This is due to
the Robertson-Seymour Theorem, which states that they are characterized by a finite set
$MForb(\mathcal{P})$
of forbidden minors.
\end{renumerate}

One common theme of all the above cited papers is the connection between
the definability properties of $\mathcal{K}$ and the arithmetic properties of
the sequence $sp_{\mathcal{K}}(n)$.  In this paper we concentrate on the relationship between definability of a class $\mathcal{K}$ of relational structures and the various linear recurrence relations $sp_{\mathcal{K}}(n)$ can satisfy. 

\subsection{Combinatorial Functions and Specker Functions}
We would like to say that 
a function $f: \N \rightarrow \N$ is a combinatorial function if
it has a combinatorial interpretation. 
One way of making this more precise is the following. We say that $\mc{K}$ is definable in  $\mathcal{L}$ if there is
a $\mathcal{L}$-sentence $\phi$ such that for every $\bar{R}$-structure $\mathfrak{A}$, 
$\mathfrak{A} \in \mc{K} $ iff $\mathfrak{A} \models \phi$.
Then a function $f: \N \rightarrow \N$ is a combinatorial function if
$f(n) = sp_{\mathcal{K}}(n)$ for some class of finite structures
$\mathcal{K}$ definable in a suitable logical formalism $\mathcal{L}$.
Here $\mathcal{L}$ could be $\FOL$, $\MSOL$ or any interesting fragment of Second Order Logic, $\SOL$. We assume the reader is familiar with these logics, cf. \cite{bk:EFT94}.

\begin{definition}[Specker\footnote{
E. Specker studied such functions in the late 1970ties in his
lectures on topology at ETH-Zurich.
} function]

A function $f: \N \rightarrow \N$ is called a {\em $\mathcal{L}^k$-Specker function}
if there is a finite set of relation symbols $\bar{R}$ of arity at most $k$ and a class
of $\bar{R}$-structures $\mc{K}$ definable in $\mathcal{L}$ such that $f(n) = sp_{\mc{K}}(n)$.
\end{definition}

A typical non-trivial example is given by A. Cayley's Theorem from 1889,
which says that $T(n)=n^{n-2}$ can be interpreted as
the number of labeled trees on $n$ vertices.
Another example are the Bell numbers $B_n$ which count the number of equivalence
relations on $n$ elements. 

In this paper we study under what conditions the Specker function given by
the sequence
$sp_{\mathcal{K}}(n)$ satisfies a linear recurrence relation.

\begin{ex}
\label{ex:1}\ 
\begin{renumerate}
\ifmargin
\marginpar{Deleted cases: old (iii) and old (vi) which appear later}
\fi

\item
The number of binary relations on $[n]$ is 
$2^{n^2}$,
and the number of linear orders on $[n]$ is $n!$. Both are $\FOL^2$-Specker functions.
$n!$ satisfies the linear recurrence relation $n!= n \cdot (n-1)!$. We note the coefficient in the recurrence relation is not constant.

\item The Stirling numbers of the first kind denoted 
$\left[\substack{n\\ k} \right]$ 
are defined as the number of ways to arrange $n$ objects into $k$
cycles. It is well known that for $n >0$ we have
$\left[\substack{n\\ 1} \right] = (n-1)!$. 
Specker functions are functions in one variable.
For fixed $k$, 
$\left[\substack{n\\ k} \right]$ is a $\FOL^2$-Specker function.
Using our main results,
we shall discuss Stirling numbers in more detail in Section \ref{se:ex}, Proposition \ref{prop:Stirling2}
and Corollary \ref{cor:Stirling2}.
\item
\label{grow}
For the functions
$2^{n^2}, n^{n-2}$ and $n!$ no linear recurrence relation with constant coefficients exists, 
because functions defined by
linear recurrence relations with constant coefficients 
grow not faster than $2^{O(n)}$.
However, for every $m \in \N$ we have that
$2^{n^2}$ satisfies a linear recurrence relation over $\Z_m$, where the
coefficients depend on $m$.
\item The Catalan numbers $C_{n}$ count the number of valid arrangements
of $n$ pairs of parentheses. $C_{n}$ is even iff $n$ is not of
the form $n=2^{k}-1$ for some $k\in\mathbb{N}$ (\cite{bk:Knuth06}).
Therefore, the sequence $C_{n}$ cannot be ultimately periodic modulo $2$. We discuss the Catalan numbers in Section \ref{se:ex}.
\end{renumerate}
\end{ex}

\ifmargin
\marginpar{Moved here and expanded somewhat}
\fi
For $R$ unary we can interpret $\left\langle [n],R\right\rangle$ 
as a binary word where position $i$ is occupied by letter $1$ if $i\in R$ 
and by letter $0$ otherwise. Similarly, For $\bar{R}=(R_1,\ldots,R_s)$ which 
consists of unary relations only we can interpret\\ $\left\langle[n],R_1,\ldots,R_s\right\rangle$ 
as a word over an alphabet of size $2^s$. 
With this way of viewing languages we have the celebrated theorem of
R. B\"uchi (and later  but independently of C. Elgot and B. Trakhtenbrot), cf. \cite{bk:Libkin04,bk:EF95} states:
\ifmarginB
\marginpar{Removed the reference as B\"uchi's Theorem}
\fi
 \begin{thm} 
 \label{th:buchi}
 \ifmargin
\marginpar{Made more exact with regard to the role of the order}
\fi
 Let $\mc{K}$ be a language. Then $\mathcal{K}$ is regular iff $\mc{K}'$ is definable in $\MSOL$ given the natural order $<_{nat}$ on $[n]$. 
\end{thm}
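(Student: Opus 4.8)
The plan is to prove the two implications separately, since this is the Büchi--Elgot--Trakhtenbrot theorem in the present notation. Throughout I identify a word $w$ of length $n$ with the structure $\langle [n], <_{nat}, R_1, \ldots, R_s \rangle$, where the unary relations $R_i$ record which letter occupies each position, exactly as in the word-encoding recalled just above the statement; the class $\mathcal{K}'$ is simply the class of these structures corresponding to the words in $\mathcal{K}$.

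For the direction from regular to $\MSOL$-definable, I would fix a deterministic finite automaton $A = (Q, \Sigma, \delta, q_0, F)$ recognizing $\mathcal{K}$ and write an $\MSOL$-sentence that asserts the existence of an accepting run. The key idea is to introduce one monadic (set) variable $X_q$ for each state $q \in Q$, intended to collect exactly the positions at which the automaton occupies state $q$ after reading the corresponding prefix. The sentence then existentially quantifies the $X_q$ and states in $\FOL$ (using $<_{nat}$, the successor relation it defines, and the letter predicates $R_i$) that the $X_q$ partition $[n]$; that the state attached to the first position agrees with $\delta(q_0, \cdot)$ applied to the first letter; that every pair of consecutive positions respects $\delta$; and that the state at the last position lies in $F$. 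All of these are first-order conditions over the guessed partition, so the whole assertion is $\MSOL$. This direction is essentially a routine encoding.

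For the converse, from $\MSOL$-definable to regular, I would argue by induction on the structure of the defining formula $\phi$, proving the stronger statement that for every $\MSOL$-formula with free first- and second-order variables the set of its satisfying extended words is regular. Here I enlarge the alphabet by extra bit-tracks: one track per free set variable $X_j$ recording membership, and one per free first-order variable $x_i$ marking its unique witnessing position. The atomic formulas $x < y$, $x \in X$ and the letter predicates are recognized by small automata over the extended alphabet; Boolean connectives use closure of regular languages under union, intersection and complement; and both $\exists x$ and $\exists X$ correspond to erasing the associated track, i.e.\ to a length-preserving projection, under which regular languages are again closed.

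The main obstacle --- and the only genuinely nontrivial ingredient --- is the interplay of negation and quantification in the second direction. Projection turns a deterministic automaton into a nondeterministic one, so handling a subsequent negation forces a determinization (subset construction) before complementing. What makes the induction go through is precisely that the regular languages form a Boolean algebra that is \emph{also} closed under projection; the cost in automaton size (a tower of exponentials across quantifier alternations) is irrelevant for the qualitative equivalence we want. I would therefore isolate the determinizability of nondeterministic automata as the crux and treat the remaining closure properties as standard.
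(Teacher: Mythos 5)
The paper does not prove this statement at all: it is the classical B\"uchi--Elgot--Trakhtenbrot theorem, which the authors simply quote with references to the literature (B\"uchi 1960, and the textbooks of Libkin and of Ebbinghaus--Flum), so there is no in-paper argument to compare against. Your sketch is the standard and correct proof of that classical result: encoding an accepting run by a guessed partition of positions into state-sets for one direction, and induction on formula structure using extra bit-tracks for free variables together with closure of regular languages under Boolean operations and projection for the other. The only detail you leave implicit is that the extended-alphabet words must be restricted to the \emph{valid} ones (each first-order track carrying exactly one mark), which is itself a regular constraint and thus routine; with that noted, the argument is complete and is exactly what the cited sources do.
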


From Theorem \ref{th:buchi} and \cite{ar:ChomskySchuetzenberger} we get immediately:
\begin{prop}
\label{prop:CS}
If $f(n)=sp_{\mathcal{K}}(n)$ is definable in $\MSOL^1$, $\MSOL$ with unary relation symbols only, given the natural order $<_{nat}$ on $[n]$, then it satisfies 
a linear recurrence relation over $\Z$  
$$
sp_{\mc{K}}(n) = \sum_{j=1}^{d} a_j \cdot sp_{\mc{K}}(n-j)
$$
with constant coefficients, 
\end{prop}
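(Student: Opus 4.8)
The plan is to reduce the statement to B\"uchi's Theorem (Theorem~\ref{th:buchi}) together with the Chomsky--Sch\"utzenberger result cited in \cite{ar:ChomskySchuetzenberger}, using the translation between unary structures and words described just above the theorem. Almost all of the work is bookkeeping; the two cited results do the heavy lifting.

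First I would make the translation precise. Since $\bar R = (R_1,\ldots,R_s)$ consists of unary relations only, a structure $\langle [n], R_1,\ldots,R_s\rangle$ is completely determined by recording, for each position $i \in [n]$, the set $\{\, j : i \in R_j \,\} \subseteq \{1,\ldots,s\}$. Reading these sets in increasing order with respect to $<_{nat}$ yields a word $w_{\mathfrak A}$ of length $n$ over the alphabet $\Sigma = \{0,1\}^s$ of size $2^s$. Because the order is treated as a \emph{fixed interpreted relation} rather than as a counted relation, the map $\mathfrak A \mapsto w_{\mathfrak A}$ is a genuine bijection between $\bar R$-structures on $[n]$ and words in $\Sigma^n$, matching labeled structures to words position by position.

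Next I would transfer definability. Let $\phi$ be the $\MSOL$-sentence defining $\mc{K}$, and let $L = \mc{K}'$ be the image language $\{\, w_{\mathfrak A} : \mathfrak A \models \phi \,\} \subseteq \Sigma^*$. By construction the bijection above restricts, for each $n$, to a bijection between $\mc{K}^n$ and $L \cap \Sigma^n$, so
$$ sp_{\mc{K}}(n) = |\mc{K}^n| = |L \cap \Sigma^n|. $$
Since $\mc{K}$ is definable in $\MSOL^1$ given $<_{nat}$, the language $\mc{K}' = L$ is $\MSOL$-definable over words, and hence Theorem~\ref{th:buchi} yields that $L$ is regular.

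Finally, I would invoke Chomsky--Sch\"utzenberger. By \cite{ar:ChomskySchuetzenberger}, for the regular language $L$ the formal power series $\sum_n |L \cap \Sigma^n|\, X^n = \sum_n sp_{\mc{K}}(n)\, X^n$ is rational; equivalently, its coefficients satisfy a linear recurrence with constant integer coefficients. Writing the denominator of this rational function in the normalized form $1 - \sum_{j=1}^{d} a_j X^j$ gives exactly $sp_{\mc{K}}(n) = \sum_{j=1}^{d} a_j \cdot sp_{\mc{K}}(n-j)$ for all sufficiently large $n$, with the $a_j \in \Z$ \emph{constant}, as claimed. The only point requiring any care is the first two steps: one must confirm that fixing $<_{nat}$ makes labeled structures correspond to words (and not to words up to reordering), so that the length-by-length count is preserved; once this is verified, the regularity of $L$ and the rationality of its generating function, and hence the constant-coefficient recurrence, follow immediately from the two cited theorems.
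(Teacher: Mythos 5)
Your proposal is correct and follows exactly the route the paper intends: the paper derives Proposition~\ref{prop:CS} ``immediately'' from Theorem~\ref{th:buchi} and the Chomsky--Sch\"utzenberger result without writing out the details, and your argument simply makes explicit the structure-to-word bijection, the transfer of $\MSOL$-definability to regularity, and the rationality of the generating function. Nothing is missing; you have just filled in the bookkeeping the paper omits.
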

\ifmargin
\marginpar{Added}
\fi
We say a function $f:\N\to\N$ is ultimately periodic over
$\mathcal{R}=\Z$ or over $\mathcal{R}=\Z_m$ 
if there exist $i,n_0\in \N$ such that for every $n\geq n_0$, $f(n+i)=f(n)$ over $\mathcal{R}$. 
It is well-known that $f$ is ultimately periodic over $\Z_m$ iff 
it satisfies a linear recurrence relation with constant coefficients over $\Z_m$. 
We note that if $f$ satisfies a linear recurrence over $\Z$ then
it also satisfies a linear recurrence over $\Z_m$ for every $m$. 
C. Blatter and E. Specker proved  the following
remarkable but little known theorem in 
\cite{pr:BlatterSpecker81},\cite{pr:BlatterSpecker84},\cite{ar:Specker88}. 

\begin{thm}[Specker-Blatter Theorem] 
\label{th:BS}
\ifmarginB
\marginpar{Slight rephrasing}
\fi
If $f(n)=sp_{\mathcal{K}}(n)$ is definable in $\MSOL^2$, $\MSOL$ with
unary and binary relation symbols only, then
for every $m\in\mathbb{N},$ 
$f(n)$ satisfies a linear recurrence relation with constant coefficients
\[sp_{\mc K}(n)\equiv\sum_{j=1}^{d_{m}}a_{j}^{(m)}sp_{\mc K}(n-j)\,\,(mod\, m)\]
and hence is  ultimately periodic
over $\Z_m$.
\end{thm}
In \cite{ar:FischerMakowsky03}
it was shown that in Proposition \ref{prop:CS} and in Theorem \ref{th:BS}
the logic $\MSOL$ can be augmented by modular counting quantifiers.

Furthermore, E. Fischer showed in \cite{ar:Fischer02}
\begin{thm}
For every prime $p \in \N$ 
there is an $\FOL^4$-definable function $sp_{\mc{K}_p}(n)$, where $\mc{K}_p$ consists of finite $(E,R)$-structures
with $E$ binary and $R$ quaternary, which is not ultimately periodic 
modulo $p$.
\end{thm}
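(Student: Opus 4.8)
\emph{Proof proposal.} The plan is to defeat, at arity four, the ultimate periodicity that the Specker--Blatter Theorem (Theorem \ref{th:BS}) guarantees at arity two, by using the quaternary relation to witness a comparison of cardinalities of first-order definable sets --- the one piece of information that no unary or binary relation can supply. Fix a prime $p$. It suffices to produce a first-order sentence $\phi_p$ over a vocabulary with one binary symbol $E$ and one quaternary symbol $R$ so that $sp_{\mc K_p}(n)$, the number of such structures on $[n]$ satisfying $\phi_p$, is not ultimately periodic modulo $p$. The non-periodic residue sequence I aim for is that of the central binomial coefficients $\binom{2n}{n}$; for $p=2$, where these are eventually even, I use the Catalan numbers $C_n$ instead, which are non-periodic modulo $2$ by Example \ref{ex:1}. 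The relevant identity is $\binom{2n}{n}=\sum_{k}\binom{n}{k}^2=\#\{(A,B): A,B\subseteq[n],\ |A|=|B|\}$.

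First I would establish the number-theoretic core: $\binom{2n}{n}\bmod p$ is not ultimately periodic. By Kummer's theorem $v_p\binom{2n}{n}$ equals the number of carries when $n$ is added to itself in base $p$, so $\binom{2n}{n}\not\equiv 0 \pmod p$ exactly when every base-$p$ digit of $n$ is at most $(p-1)/2$. The number of such $n$ below $p^m$ is $\left(\tfrac{p+1}{2}\right)^m$, so these residues are nonzero on a set of density $\left(\tfrac{p+1}{2p}\right)^m\to 0$; yet they are nonzero infinitely often, for instance at $n=p^m$. An ultimately periodic sequence that takes a nonzero value somewhere in a period is nonzero on a set of positive density, so no such period can exist.

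The heart of the argument is definability: expressing $|A|=|B|$ in $\FOL$. A bijection $A\to B$, once present as a relation, is first-order checkable, but it is binary and there are $|A|!$ of them, so counting triples $(A,B,f)$ replaces the desired sum by $\sum_k\binom{n}{k}^2 k!$. This is fatal modulo $p$: since $k!\equiv 0$ once $k\ge p$, the sum reduces to $\sum_{k<p}\binom{n}{k}^2 k!$, a polynomial combination that is periodic in $n$ (indeed it is the speed of the arity-two class of partial injections, hence periodic by Theorem \ref{th:BS}). This is exactly where arity four enters. Viewing the quaternary relation $R$ as a binary relation on the product set $A\times B\subseteq[n]\times[n]$, I would attach to each pair $(A,B)$ a first-order definable family of $\mathbb{F}_p$-linear certificates on $A\times B$ whose cardinality is congruent to $1$ modulo $p$ when $|A|=|B|$ and to $0$ otherwise, obtained by a Cauchy--Frobenius orbit-counting argument in which a suitable $p$-group action has a single fixed certificate precisely in the square case. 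The remaining $\FOL$-encoding --- packing the two unary predicates and the order-free matching data into $E$, the product-set gadget into $R$, and asserting their closure properties by a single sentence $\phi_p$ --- is routine, and yields $sp_{\mc K_p}(n)\equiv\binom{2n}{n}\pmod p$.

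The main obstacle is precisely this last construction: designing an $\FOL$-definable quaternary certificate whose count, modulo $p$, is exactly the indicator of $|A|=|B|$, so that the \emph{unsigned} speed reproduces the non-periodic residue sequence without the spurious factorial multiplicity that any naive bijection-witness incurs. Everything else --- the appeal to Kummer's theorem with the density argument, and the mechanical first-order coding --- is comparatively routine.
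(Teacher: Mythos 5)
This theorem is not proved in the paper at all: it is E.~Fischer's result, quoted from \cite{ar:Fischer02}, so there is no in-paper argument to compare against. Judged on its own terms, your proposal has a genuine gap, and it sits exactly where you admit it does. The parts you carry out --- Vandermonde's identity $\binom{2n}{n}=\sum_k\binom{n}{k}^2$, the Kummer/density argument showing $\binom{2n}{n}\bmod p$ is not ultimately periodic for odd $p$, and the observation that counting explicit bijections introduces a factor $k!$ that annihilates everything mod $p$ for $k\ge p$ --- are all correct, but they are the routine periphery. The entire content of the theorem is the step you defer: exhibiting an $\FOL$-definable family of quaternary ``certificates'' on $A\times B$ whose cardinality is $\equiv 1 \pmod p$ exactly when $|A|=|B|$. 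You gesture at a Cauchy--Frobenius argument with a $p$-group having a unique fixed certificate in the square case, but no candidate set of certificates is given, and the obvious candidates fail: any $\FOL$-definable set $X_{A,B}$ of relations is invariant under $S_A\times S_B$, and for the natural Sylow $p$-subgroups $P_A\times P_B$ the fixed-point count depends on $|A|,|B|$ only through their base-$p$ orbit structure (e.g.\ a single bijection is never fixed by a nontrivial product subgroup, consistent with $k!\equiv 0$), so getting the fixed-point count to be the exact indicator of $k=l$ is precisely the nontrivial invention of Fischer's paper, not a routine afterthought. Until that set is written down and its count computed, the claim $sp_{\mc K_p}(n)\equiv\binom{2n}{n}\pmod p$ is unsupported.

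Two further problems. First, your fallback for $p=2$ via Catalan numbers inherits the same difficulty in a worse form: the only interpretation of $C_n$ available in this setting (Section~\ref{subse:catalan}) is as a \emph{counting order invariant} $\MSOL^2$-Specker function, i.e.\ it uses an auxiliary linear order on $[n]$ in an essential, non-t.v.o.i.\ way; the theorem demands an order-free $\FOL$-definable class, so you would need an entirely separate construction there (for instance targeting $\tfrac12\binom{2n}{n}=E_{2,=}(2n)$, which is non-periodic mod $2$ by Proposition~\ref{pr:E2eq} --- but again only realized in the paper with an order). Second, your ``$\mathbb{F}_p$-linear certificates'' would need a $p$-valued function on $(A\times B)$ or $(A\times B)^2$, whereas the vocabulary fixed in the statement provides a single quaternary relation, i.e.\ one bit per quadruple; this is a repairable encoding issue but it is not addressed. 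In short: right target sequence, right diagnosis of why arity two is insufficient, but the construction that makes arity four succeed is missing.
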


The definability status of various combinatorial functions from the literature
will be discussed in Section \ref{se:ex}.

\subsection{Formal Power Series}
\ifmargin
\marginpar{New}
\fi
Our main result can be viewed as related to 
the theory of generating functions for formal languages, cf. \cite{bk:SS78,bk:BR88}
Let $A$ be a commutative semi-ring with unity and denote by 
$A\left\langle\left\langle x \right\rangle\right\rangle$
the semi-ring of formal power series $F$ in one variable over $A$
$$
F=\sum_{n=0}^\infty f(n) x^n 
$$ 
where $f$ is a function from $\N$ to $A$. 
\ifmarginB
\marginpar{Rephrased}
\fi
A power series $F$ in on variable is an 
{\em $A$-rational series} if it is in
the closure of the polynomials over $A$ by 
the sum, product and star operations, 
\ifmarginB
\marginpar{Definition of star operation}
\fi
where the star operation $F^*$ is defined as $F^*=\sum_{i=0}^\infty F^i$.  

\ifmarginB
\marginpar{This paragraph is repharsed}
\fi
We say a function $f:\N\to A$ is $A$-rational if 
$\{f(n)\}_{n=0}^\infty$ is the sequence of coefficients of an $A$-rational series $F$. 
We will be interested in the cases of $A=\N$ and $A=\Z$. 
It is trivial that every $\N$-rational function is a $\Z$-rational function.
It is well-known that $\Z$-rational functions are exactly those 
functions $f:\N\to \Z$ which satisfy linear recurrence relations over $\Z$. 
Furthermore, $\Z$-rational functions can also be characterized 
as those functions $f$ which are the coefficents of 
the power series of $P(x)/Q(X)$, where $P,Q\in \Z[x]$ are polynomials and $Q(0)=1$. 

\ifmarginB
\marginpar{Added}
\fi 
We aim to study Specker functions, which are by definition functions over $\N$. 
Clearly, every $\N$-rational function is over $\N$, 
while the $\Z$-rational functions may take negative values. 
Those non-negative $\Z$-rational functions 
which are  $\N$-rational  were characterized by Soittola, cf. \cite{ar:Soittola76}.
However, there are non-negative $\Z$-rational series which are not $\N$-rational, cf. \cite{bk:Eilenberg74,ar:BDFR01}.

There are strong ties between regular languages and rational series. From Theorem \ref{th:buchi} and 
\cite[Thm~II.5.1]{bk:SS78}
it follows that:
\begin{prop}
 \label{th:regularRational}
 Let $\mc{K}$ be a language. 
 If $\mc{K}$ is definable in $\MSOL$ given the natural order $<_{nat}$ on $[n]$, 
 then $sp_{\mc{K}}$  is $\N$-rational.
\end{prop}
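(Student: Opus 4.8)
The plan is to reduce the statement to B\"uchi's theorem and then to the automata-theoretic description of $\N$-rational series. First I would apply Theorem \ref{th:buchi}: since $\mc{K}$ is $\MSOL$-definable given the natural order $<_{nat}$, it is a regular language over some finite alphabet $\Sigma$ (for structures over unary relations $R_1,\dots,R_s$, the alphabet is the set $2^{\{1,\dots,s\}}$ of predicate patterns at a position). Under this identification $sp_{\mc{K}}(n)$ is exactly the number of length-$n$ words in $\mc{K}$, i.e. $sp_{\mc{K}}(n)=|\mc{K}\cap\Sigma^n|$, so it suffices to show that the counting series $\sum_{n\geq 0} sp_{\mc{K}}(n)\, x^n$ is $\N$-rational.

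Next I would fix a deterministic finite automaton recognizing $\mc{K}$, with state set $Q$, initial state $q_0$ and accepting set $F$, and form the transfer matrix $M(x)$ indexed by $Q$ whose $(i,j)$ entry is $c_{ij}\,x$, where $c_{ij}\in\N$ is the number of letters that label a transition from $i$ to $j$. Because the automaton is deterministic, each accepted word of length $n$ corresponds to a unique path, so that $sp_{\mc{K}}(n)=\sum_{q\in F}\bigl(M(x)^n\bigr)_{q_0,q}$ read off the coefficient of $x^n$. Summing over $n$ gives $\sum_{n\geq0}sp_{\mc{K}}(n)\,x^n=\sum_{q\in F}\bigl(M(x)^*\bigr)_{q_0,q}$, where $M(x)^*=\sum_{n\geq0}M(x)^n$. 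Since $M(x)$ is a matrix of proper $\N$-rational series (every entry has zero constant term), the entries of its star $M(x)^*$ are again $\N$-rational, being obtained from the $c_{ij}x$ by the sum, product and star operations; hence the counting series is $\N$-rational. This is exactly the content of \cite[Thm~II.5.1]{bk:SS78}.

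The hard part, and the reason one cannot simply quote Chomsky--Schuetzenberger, is to keep the rationality over $\N$ rather than merely over $\Z$: a non-negative $\Z$-rational series need not be $\N$-rational. What secures $\N$-rationality here is determinism of the automaton (equivalently, Sch\"utzenberger's theorem that recognizable and rational series coincide over $\N$), which presents the counting series through the $\N$-entries $c_{ij}$ and the star operation without ever introducing subtraction. The remaining points are routine bookkeeping: that the coefficient extraction above is correct and that the correspondence between word structures with $<_{nat}$ and words over $\Sigma$ is a bijection on each $[n]$.
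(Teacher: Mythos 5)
Your proof is correct and follows the same route as the paper, which simply derives the proposition from Theorem \ref{th:buchi} together with \cite[Thm~II.5.1]{bk:SS78}; your transfer-matrix argument with a deterministic automaton is precisely the standard proof of that cited theorem, and you correctly identify determinism as what keeps the construction subtraction-free and hence over $\N$. Nothing is missing.
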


\subsection{Extending $\MSOL$ and Order Invariance}
In this paper we investigate the existence of linear and modular linear
recurrence relations of Specker functions for the case
where $\mc{K}$ is definable in logics $\mathcal{L}$ which are sublogics of $\SOL$
and extend $\MSOL$.  

\ifmarginB
\marginpar{The definition of CMSOL was taken outside the example}
\fi
$C_{a,b}\MSOL$ is 
the extension of $\MSOL$ with modular counting quantifiers
"the number of elements $x$ satisfying $\phi(x)$ equals $a$ modulo $b$". 
$C_{a,b}\MSOL$ is a fragment of $SOL$ since 
the modular counting quantifiers are definble in $\SOL$ 
using a linear order of the universe which is existentially quantifed. 
\begin{ex}
The Specker function which counts the number of Eulerian graphs with $n$ vertices
is not $\MSOL$-definable. It is definable in $C_{a,b}\MSOL$ and indeed $b=2$ suffices. 
\end{ex}

We now look at the case where $[n]$ is equipped with a linear order.

\begin{definition}[Order invariance]
\ 
\begin{renumerate}
\ifmargin
\marginpar{Expanded and clarified definitions}
\fi
\item
A class $\mathcal{D}$ of {\em ordered $\bar{R}$-structures} is a class of $\bar{R}\cup\{<_1\}$-structures, where for every $\mathfrak{A}\in\mathcal{D}$ the interpretation of the relation symbol $<_1$ is always a linear order of the universe of $\mathfrak{A}$. 
\item
An $\mathcal{L}$ formula $\phi(\bar{R},<_1)$ for ordered $\bar{R}$-structures
is {\em truth-value order invariant (t.v.o.i.)}
if for any two structures $\mathfrak{A}_i= \langle [n], <_i, \bar{R} \rangle$ ($i=1,2$)
we have that $\mathfrak{A}_1 \models \phi$ iff
$\mathfrak{A}_2 \models \phi$.  Note $\mathfrak{A}_1$ and $\mathfrak{A}_2$ differ only in the linear orders $<_1$ and $<_2$ of $[n]$.
We denote by $\TVOI\mathcal{L}$ the set of $\mathcal{L}$-formulas
for ordered $\bar{R}$-structures
which are t.v.o.i. We consider $\TVOI\mc{L}$ formulas as formulas for $\bar{R}$-structures.
\item
For a class of ordered structures $\mc{D}$, let $osp_{\mc{D}}(n,<_1)=$
$$
| \{ (R_1, \ldots, R_s) \subseteq 
[n]^{\rho(1)} \times \ldots \times [n]^{\rho(s)}  : 
\langle [n], <_1, R_1, \ldots , R_s \rangle \in \mc{D} \}|
$$
A function $f:\N\to\N$ is called an  {\em $\mathcal{L}^k$-ordered Specker function} if there is a class of ordered $\bar{R}$-structures $\mc{D}$ of arity at most $k$ definable in $\mathcal{L}$ such that $f(n)=osp_{\mc{D}}$. 
\item
A function $f: \N \rightarrow \N$ is called a {\em counting order invariant (c.o.i.) $\mathcal{L}^k$-Specker function}
if there is a finite set of relation symbols $\bar{R}$ of arity at most $k$ and a class
of ordered $\bar{R}$-structures $\mc{D}$ definable in $\mathcal{L}$ such that 
for all linear orders $<_1$ and $<_2$ of $[n]$ we have
$f(n) = osp_{\mc{D}}(n, <_1) = osp_{\mc{D}}(n, <_2)$.
\end{renumerate}
\end{definition}

\begin{ex}
\label{ex:order}
\ 
\begin{renumerate}
\item
Every formula $\phi(\bar{R},<_1) \in \TVOISOL^k$ is equivalent to the formula 
$\psi(\bar{R}) = \exists <_1 \phi(\bar{R}, <_1) \land \phi_{linOrd}(<_1)  \in \SOL^k$, where $\phi_{linOrd}(<_1)$ says $<_1$ is a linear ordering of the universe.  
\item
Every $\TVOIMSOL^k$-Specker function is also a counting order invariant \\$\MSOL^k$-Specker function. 
\item
We shall see in Section \ref{se:modlinrec} that there are counting order invariant
$\MSOL^2$-definable Specker functions which are not $\TVOIMSOL^2$-definable.
\end{renumerate}
\end{ex}

\ifmarginB
\marginpar{taken outside the example}
\fi
The following proposition is folklore:
\begin{prop}
Every formula in $C_{a,b}\MSOL^k$ is equivalent to a formula in $\TVOIMSOL^k$.  
\end{prop}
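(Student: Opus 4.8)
The plan is to translate each $C_{a,b}\MSOL^k$ formula $\Phi$ over the vocabulary $\bar{R}$ into an $\MSOL$ formula $\Phi^*$ over the expanded vocabulary $\bar{R}\cup\{<_1\}$, proceeding by induction on the structure of $\Phi$, and to show that $\Phi^*$ is equivalent to $\Phi$ on every ordered structure. Since $\Phi$ does not mention $<_1$, equivalence with $\Phi$ on \emph{every} ordered structure automatically forces $\Phi^*$ to be truth-value order invariant, so the t.v.o.i. condition comes for free once the equivalence is established. The Boolean connectives and the ordinary first-order and monadic second-order quantifiers are handled by the trivial homomorphic translation, which never refers to $<_1$. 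The only nontrivial case is the modular counting quantifier, for which I use the linear order to simulate a counter modulo $b$.

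Concretely, to translate $C_{a,b}x\,\psi(x)$, where $\psi$ has already been translated to an $\MSOL$ formula $\psi^*(x)$, I existentially quantify over a partition $S_0,\dots,S_{b-1}$ of the universe whose intended meaning is that $S_i(x)$ holds iff the number of elements $y\leq_1 x$ satisfying $\psi^*(y)$ is congruent to $i$ modulo $b$. Using $<_1$ I can define the $<_1$-minimum, the $<_1$-maximum, and the immediate-successor relation in $\MSOL$, and then impose the obvious local constraints: the minimum element lands in $S_1$ or $S_0$ according to whether $\psi^*$ holds there; passing from an element to its immediate successor increments the index modulo $b$ exactly when $\psi^*$ holds at the successor; and the whole formula accepts iff the maximum element lies in $S_a$. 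This is the standard encoding of a finite automaton over a linearly ordered word in $\MSOL$, specialized to the $b$-state counter, and it defines precisely the property $|\{x:\psi^*(x)\}|\equiv a\pmod{b}$.

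Correctness and order invariance then rest on the elementary fact that the residue $|\{x:\psi^*(x)\}|\bmod b$ depends only on the set $\{x:\psi^*(x)\}$, not on the order in which its elements are enumerated. By the induction hypothesis $\psi^*$ is equivalent to the order-free formula $\psi$, hence this set coincides with $\{x:\psi(x)\}$ and is the same under any linear order; so the counter construction returns the same truth value for every $<_1$ and agrees with $C_{a,b}x\,\psi(x)$. Finally, the translation introduces only the binary symbol $<_1$, which is already part of the ordered vocabulary, together with finitely many \emph{monadic} quantified variables $S_0,\dots,S_{b-1}$, so the arity bound $k$ on $\bar{R}$ is preserved and the output genuinely lies in $\TVOIMSOL^k$.

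I expect the main obstacle to be the bookkeeping in the inductive step rather than any deep idea: counting quantifiers may be nested and $\psi$ may carry free first- and second-order variables, so the induction must be phrased so that $\psi^*$ is equivalent to the order-free $\psi$ \emph{uniformly in all free variables}, which is exactly what guarantees that the set whose size is measured is order independent. Once the induction hypothesis is set up to yield equivalence with the order-free source formula on all ordered structures, both the correctness of the counter simulation and the t.v.o.i. conclusion follow as above.
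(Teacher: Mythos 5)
Your proposal is correct and follows essentially the same route as the paper: the paper proves the special case $C_{0,2}(x=x)$ by introducing a monadic set that alternates along the successor relation of $<_1$ and remarks that the general case is similar, which is precisely your $b$-state counter encoding with the partition $S_0,\dots,S_{b-1}$. Your observation that t.v.o.i.\ comes for free because the source formula never mentions $<_1$ is the same (implicit) justification the paper relies on.
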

\begin{proof}
We give a sketch of the proof for the $C_{a,b} \MSOL$ formula $\phi_{even}=C_{0,2}(x=x)$, which says the size of 
the universe is even. The general proof is similar. 
$\phi_{even}$ can be written as 
$
\phi(\bar{R},<_1)=\exists U \phi_{min}(U)\land
\forall x \forall y (\phi_{succ}(x,y) \to (x\in U \leftrightarrow y\notin U) )
\land \forall x (x\in U \to \exists y \ x<_1 y)
$
where 
$\phi_{min}(U)=\forall x 
\left((\neg\exists y \ y <_1 x)\to  x\in U\right)$ 
says the minimal element $x$ in the order $<_1$ belongs to $U$, and $\phi_{succ}(x,y)=(x<_1y)\land \neg \exists z (x<_1z \land z<_1 y)$ says $y$ is the successor of $x$ in $<_1$. 
\end{proof}

\subsection{Main Results}
Our first result is a characterization of functions over the natural numbers 
which satisfy a linear recurrence relation over $\Z$.

\begin{thm}
\label{th:main1}
Let $f$ be a function over $\mathbb{N}$. 
Then $f$ satisfies a linear recurrence relation over $\Z$ iff
$f=f_1 - f_2$ is the difference of two counting order invariant $\MSOL^1$-Specker
functions.
\end{thm}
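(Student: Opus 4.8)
**The plan is to prove the two directions separately, using the characterization of $\Z$-rational functions recalled in the excerpt.**

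The plan is to prove both directions by reducing to the one-variable combinatorics of unary relations, where an ordered structure $\langle [n], <_1, R_1,\ldots,R_s\rangle$ with all $R_i$ unary is exactly a word of length $n$ over the alphabet $\Sigma = \{0,1\}^s$ of size $2^s$. Under this identification, a $\TVOIMSOL^1$-definable class becomes an $\MSOL$-definable language with the natural order, hence regular by Theorem~\ref{th:buchi}, and a counting order invariant $\MSOL^1$-Specker function $f(n)$ counts the number of such words of length $n$, i.e. the number of length-$n$ words in a regular language. So the key background fact I would lean on is the equivalence, recalled in the Formal Power Series subsection, that a function $f:\N\to\Z$ satisfies a linear recurrence relation over $\Z$ with constant coefficients if and only if it is $\Z$-rational, if and only if $\sum_n f(n)x^n = P(x)/Q(x)$ with $P,Q\in\Z[x]$ and $Q(0)=1$.

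For the direction ``difference of two c.o.i.\ $\MSOL^1$-Specker functions $\Rightarrow$ linear recurrence over $\Z$'', I would first show each such $f_i$ is $\N$-rational. By Example~\ref{ex:order}(ii) I may pass to a $\TVOIMSOL^1$ presentation, view the class as a regular language via Theorem~\ref{th:buchi}, and invoke Proposition~\ref{th:regularRational} to conclude $f_i$ is $\N$-rational, hence in particular $\Z$-rational. Since $\Z$-rational functions are closed under difference (the associated power series form a ring, being closed under sum, product and the relevant closure operations, and $-F$ is obtained since the coefficient ring is $\Z$), the difference $f = f_1 - f_2$ is again $\Z$-rational, and therefore satisfies a linear recurrence relation over $\Z$ by the characterization above.

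For the converse, ``linear recurrence over $\Z$ $\Rightarrow$ difference of two c.o.i.\ $\MSOL^1$-Specker functions'', I start from $\sum_n f(n)x^n = P(x)/Q(x)$ with $Q(0)=1$ and split off the negative parts. The natural move is to write $f = g - h$ where $g$ and $h$ are each \emph{non-negative} $\Z$-rational functions: one can separate the rational expression so that both the numerator and the relevant coefficients are non-negative, for instance by absorbing signs so that $g$ and $h$ have power series with coefficients in $\N$. Once I have a non-negative $\Z$-rational function with non-negative coefficients, the task is to realize it as the counting function of a regular language, i.e. to produce a $\Z$-weighted (or here $\N$-weighted) automaton whose length-$n$ behavior equals the coefficient; the length-$n$ count of a suitable regular language over an alphabet $\{0,1\}^s$ then equals this coefficient, and by Theorem~\ref{th:buchi} that language is $\MSOL$-definable with the natural order, giving a $\TVOIMSOL^1$ (hence c.o.i.\ $\MSOL^1$) presentation via Example~\ref{ex:order}(ii).

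\textbf{The main obstacle} I anticipate is precisely the last step: a non-negative $\Z$-rational series need not be $\N$-rational (this is exactly the Soittola/Eilenberg phenomenon flagged in the excerpt, \cite{ar:Soittola76,bk:Eilenberg74,ar:BDFR01}), so I cannot simply realize $f(n)$ itself as a word count. The decomposition $f = f_1 - f_2$ is what buys me the freedom to avoid this: rather than forcing $f$ to be $\N$-rational, I only need to write it as a \emph{difference} of two functions that are each word-counts of regular languages. The delicate part is to choose the splitting $P(x)/Q(x) = P_1(x)/Q(x) - P_2(x)/Q(x)$ (or, more robustly, to clear denominators and regroup so that both resulting series have genuinely non-negative integer coefficients) in a way that is guaranteed to succeed for every $\Z$-rational $f$, and then to build the two regular languages realizing $f_1$ and $f_2$. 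I expect the cleanest route is to expand $1/Q(x)$ as a geometric-type series to control coefficient signs, possibly enlarging the alphabet (increasing $s$) to create enough ``multiplicity'' to absorb the required positive integer weights as counts of labeled configurations, so that each of $f_1, f_2$ is manifestly the speed of a regular, hence $\TVOIMSOL^1$-definable, class.
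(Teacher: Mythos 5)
Your forward direction is essentially the paper's: fix the natural order, view a class of ordered structures with unary relations as a language over $\{0,1\}^s$, conclude regularity from Theorem~\ref{th:buchi} and a linear recurrence over $\Z$ from Proposition~\ref{prop:CS} (or $\N$-rationality from Proposition~\ref{th:regularRational}), and use closure of $\Z$-rational functions under difference. One small slip: Example~\ref{ex:order}(ii) goes from $\TVOIMSOL^1$ to c.o.i., not the other way, so you cannot ``pass to a $\TVOIMSOL^1$ presentation''; but you do not need to, since counting against the natural order already puts you in the setting of Theorem~\ref{th:buchi}.

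The converse, however, has a genuine gap at exactly the point you flag as ``the delicate part''. Reducing the claim to ``every $\Z$-rational series is a difference of two $\N$-rational series, each realizable as the speed of a regular language'' is a legitimate strategy, but that reduction \emph{is} the theorem, and the devices you name do not produce it. Splitting only the numerator, $P/Q = P_1/Q - P_2/Q$ with $P_1,P_2\in\N[x]$, fails because $1/Q(x)$ itself generally has coefficients of both signs (already for $Q(x)=1+x$), so each $P_i/Q$ still has mixed-sign coefficients; and expanding $1/Q$ as a geometric series in $1-Q$ reintroduces alternating signs through the binomial expansion of powers of a difference. A correct completion along your lines does exist --- pass to a linear representation $f(n)=\lambda M^n\gamma$ over $\Z$ via the companion matrix of the recurrence, split $\lambda$, $M$, $\gamma$ entrywise into non-negative parts, and simulate the signed products by a doubled non-negative automaton whose extra states track the parity of the accumulated sign, yielding $f$ as a sum of four $\N$-recognizable series minus a sum of four others --- but none of this is in your proposal, and it is the entire content of the hard direction. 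The paper avoids rational series altogether: it encodes the recurrence directly as a c.o.i.\ $\MSOL^1$-Specker \emph{polynomial} summing over paths of the recurrence tree (Theorem~\ref{th:main2} together with Lemma~\ref{lem:EvalPol}), and then converts an integer evaluation into a difference of two Specker functions by interpreting $a^{|Y|}$ for $a>0$ as the number of partitions of $Y$ into $a$ labeled blocks and handling the sign $(-1)^{|Y|}$ by splitting on the parity of $|Y|$, which is $\MSOL$-expressible given the order. You should either carry out the weighted-automaton decomposition in full or adopt a direct encoding of this kind.
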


In the terminology of rational functions we get the following corollary:
\ifmargin
\marginpar{Corollary on rationality}
\fi
\begin{cor}
 \label{cor:main1}
 Let $f$ be a function $\mathbb{N}\to\N$. Then $f$ is $\Z$-rational iff $f$ is the difference of two $\N$-rational functions. 
\end{cor}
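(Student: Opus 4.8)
The plan is to derive Corollary \ref{cor:main1} directly from Theorem \ref{th:main1} by translating the two equivalent conditions into the language of rational functions, using the characterizations already established in the subsection on formal power series. The key bridge is the fact, stated in the excerpt, that $\Z$-rational functions are \emph{exactly} those functions $f:\N\to\Z$ which satisfy a linear recurrence relation over $\Z$. Thus the left-hand side of Corollary \ref{cor:main1}, ``$f$ is $\Z$-rational,'' is literally synonymous with the left-hand side of Theorem \ref{th:main1}, ``$f$ satisfies a linear recurrence relation over $\Z$.'' So the entire content of the corollary reduces to showing that the right-hand sides match, i.e.\ that ``$f$ is the difference of two counting order invariant $\MSOL^1$-Specker functions'' is equivalent to ``$f$ is the difference of two $\N$-rational functions.''

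First I would establish that every counting order invariant $\MSOL^1$-Specker function is $\N$-rational. Here I would use Proposition \ref{th:regularRational}: a language definable in $\MSOL$ given the natural order has an $\N$-rational speed. The subtlety is that a counting order invariant Specker function is defined via an ordered class $\mc{D}$ whose count $osp_{\mc{D}}(n,<_1)$ is independent of the chosen order; in particular it equals the count taken with respect to the natural order $<_{nat}$. Since $\mc{D}$ is $\MSOL$-definable and the relations in $\bar R$ are unary (arity $k=1$), the structures $\langle[n],<_{nat},R_1,\dots,R_s\rangle$ correspond exactly to words over an alphabet of size $2^s$, so $\mc{D}$ becomes an $\MSOL$-definable (hence regular, by Theorem \ref{th:buchi}) language, and its speed is $\N$-rational by Proposition \ref{th:regularRational}. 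This gives the forward inclusion.

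Conversely, I would argue that every $\N$-rational function arises as a counting order invariant $\MSOL^1$-Specker function. An $\N$-rational function is the speed of some regular language $L$ over a finite alphabet $\Sigma$ of size $2^s$; by Theorem \ref{th:buchi}, $L$ is $\MSOL$-definable with the natural order, and by encoding each letter via the $s$ unary predicates $R_1,\dots,R_s$ we obtain an $\MSOL^1$-definable ordered class $\mc{D}$. The only point requiring care is order invariance: one must check that the defining sentence can be taken so that $osp_{\mc{D}}(n,<_1)$ does not depend on $<_1$. Since over a set of size $n$ all linear orders are isomorphic and the count of words of length $n$ in $L$ is an order-independent quantity, the total count is automatically the same for every order, so the resulting Specker function is counting order invariant. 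Combining the two inclusions, the two notions of difference coincide, and substituting into Theorem \ref{th:main1} yields the corollary.

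The main obstacle I anticipate is the order-invariance bookkeeping in both directions, rather than the rationality translation itself, which is essentially a restatement. Specifically, making precise that an $\MSOL^1$-definable language under $<_{nat}$ can be recast as a counting order invariant class (and vice versa) requires verifying that the word-encoding respects the counting-order-invariance condition uniformly in $n$; this is routine given that linear orders on $[n]$ form a single isomorphism type, but it is the step where the definitions must be matched carefully. Everything else follows formally from Theorem \ref{th:main1}, Proposition \ref{th:regularRational}, and the quoted equivalence between $\Z$-rationality and $\Z$-linear recurrences.
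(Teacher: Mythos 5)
Your reduction of the corollary to Theorem \ref{th:main1} via the identity ``$\Z$-rational $=$ satisfies a linear recurrence over $\Z$'' is exactly the intended reading, and your forward direction (every c.o.i.\ $\MSOL^1$-Specker function is $\N$-rational, by evaluating at $<_{nat}$ and invoking Theorem \ref{th:buchi} and Proposition \ref{th:regularRational}) is precisely what the paper relies on. Where you diverge is in the backward direction: you set out to prove the full converse inclusion, that every $\N$-rational function is a c.o.i.\ $\MSOL^1$-Specker function, so that the two notions of ``difference'' coincide classwise. That inclusion is true, but the step ``an $\N$-rational function is the speed of some regular language'' is itself a nontrivial theorem of the Salomaa--Soittola/Eilenberg theory (one must pass from a nonnegative linear representation to an unambiguous automaton over an enlarged alphabet so that \emph{words}, not accepting paths, are what get counted); you assert it without proof or citation, and it is the only non-routine ingredient in your argument. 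The paper's route avoids it entirely: for the ``if'' direction one only needs that $\N$-rational functions are in particular $\Z$-rational and that functions satisfying linear recurrences over $\Z$ are closed under differences (exactly as used in Section \ref{subse:Proof}), which gives ``difference of two $\N$-rational functions $\Rightarrow$ $\Z$-rational'' directly, without ever re-encoding an $\N$-rational function as a Specker function. So your proof is correct provided you supply a citation for the census-function characterization of $\N$-rational series, but it proves more than the corollary needs, and the lighter argument bypasses the one step you left unjustified. Your side observation that counting order invariance is automatic for $\MSOL$-definable ordered classes (all linear orders on $[n]$ being isomorphic and $\MSOL$-satisfaction being isomorphism-invariant) is correct and does dispose of the bookkeeping you flagged as the main obstacle.
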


In the proof of Theorem \ref{th:main1}
we introduce the notion of Specker polynomials, which can be thought of as a special
case of graph polynomials where graphs are replaced by linear orders.

Next we show that the Specker-Blatter Theorem
cannot be extended to counting order invariant Specker functions which
are definable in $\MSOL^2$. More precisely:

\begin{prop}
\label{pr:E2eq}
Let $E_{2,=}(n)$ be the number of equivalence relations with two equal-sized equivalence classes. Then
$E_{2,=}(2n)= {\frac{1}{2}} {{2n}\choose{n}}$, and $E_{2,=}(2n+1)=0$.
$E_{2,=}$ is a counting order invariant $\MSOL^2$-definable. However, it does not satisfy a linear recurrence
relation over $\Z_2$, since it is not ultimately periodic modulo $2$.
To see this note that $E_{2,=}(2n)=0 \pmod{2}$ iff $n$ is an even power of $2$.
\end{prop}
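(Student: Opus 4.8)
The claim has three parts that I would establish in order. First, the closed-form count: $E_{2,=}$ counts equivalence relations on $[n]$ whose partition has exactly two blocks of equal size. For odd $n$ no such partition exists, so $E_{2,=}(2n+1)=0$. For even $n=2n$, I would count ordered pairs of blocks of size $n$ each, which is $\binom{2n}{n}$, and then divide by $2$ because the two blocks are interchangeable (the equivalence relation does not distinguish which block is ``first''); this gives $E_{2,=}(2n)=\frac{1}{2}\binom{2n}{n}$. This step is routine.

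Second, I would show $E_{2,=}$ is a counting order invariant $\MSOL^2$-definable Specker function. The underlying structures carry a single binary relation $R$, and the defining sentence $\phi$ asserts that $R$ is an equivalence relation (reflexive, symmetric, transitive, all $\FOL$) together with the condition that there are exactly two equivalence classes of equal size. Having exactly two classes is $\MSOL$-expressible by quantifying over a set $U$ that is a union of classes and stating it is nonempty, its complement is nonempty, and no further splitting into $R$-closed nonempty proper subsets is possible; equal size of the two classes is expressed using the linear order $<_1$ by asserting the existence of an order-preserving bijection between $U$ and its complement, which is $\MSOL^2$-definable via a binary matching relation quantified existentially. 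The crucial point is that the count does not depend on the particular order $<_1$: the number of equivalence relations of the required combinatorial type is an order-independent quantity, so $osp_{\mc{D}}(n,<_1)=osp_{\mc{D}}(n,<_2)$ for all orders, making $E_{2,=}$ counting order invariant in the sense of the definition.

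Third, and this is the heart of the matter, I would show $E_{2,=}$ is not ultimately periodic modulo $2$, which by the remark preceding Theorem \ref{th:BS} is equivalent to failing any linear recurrence over $\Z_2$. Since $E_{2,=}(2n+1)=0$, the parity of the sequence is governed by the parity of $\frac{1}{2}\binom{2n}{n}$ on the even indices. I would analyze $\frac{1}{2}\binom{2n}{n} \pmod 2$ using Kummer's theorem (the $2$-adic valuation of $\binom{2n}{n}$ equals the number of carries when adding $n$ to $n$ in base $2$, i.e.\ the number of $1$'s in the binary expansion of $n$), concluding that $E_{2,=}(2n)\equiv 0 \pmod 2$ exactly when $n$ is an even power of $2$, as stated. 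The set $\{n : n \text{ is an even power of } 2\}$ has unbounded gaps between consecutive elements, so the positions where the sequence is even modulo $2$ cannot be eventually periodic; hence the whole sequence is not ultimately periodic modulo $2$.

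The main obstacle will be the third step: establishing the precise parity condition via the carry-counting (Kummer/Lucas) analysis and then arguing carefully that the resulting pattern of positions is incompatible with ultimate periodicity, since any ultimately periodic $0/1$ pattern has bounded gaps between equal values whereas the even powers of $2$ do not.
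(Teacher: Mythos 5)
Your first and third steps are essentially the paper's. The closed form and the vanishing at odd $n$ are as you say, and the paper likewise disposes of non-periodicity by observing that the parity pattern of $\tfrac12\binom{2n}{n}$ has unbounded gaps. One caution: your own Kummer computation gives $\nu_2\bigl(\tfrac12\binom{2n}{n}\bigr)=s_2(n)-1$ where $s_2(n)$ is the binary digit sum, so the value is \emph{odd} exactly when $n$ is a power of $2$ and \emph{even} otherwise; the sparse set is the set of positions carrying an odd value, not an even one. The unbounded-gap argument applies to whichever residue occurs along a sparse infinite set, so the conclusion survives, but your stated parity criterion does not follow from the computation you describe.

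The genuine gap is in your second step. To express that the two classes have equal size you invoke ``a binary matching relation quantified existentially''; existential quantification over a binary relation is full second-order, not \emph{monadic} second-order, so the resulting sentence is not in $\MSOL$ (the superscript $2$ in $\MSOL^2$ bounds the arity of the counted vocabulary $\bar R$; it does not license binary second-order quantifiers inside the formula). Nor can this be repaired inside $\MSOL$ using only the order: equicardinality of $U$ and its complement over a linear order corresponds to the non-regular language of words with equally many occurrences of two letters, hence is not $\MSOL$-definable by B\"uchi's theorem. The paper avoids this by moving the bijection into the counted vocabulary: it counts triples $(U,R,F)$ with $U,R$ unary and $F$ binary, where the defining formula says $U,R$ partition $[n]$, the $<_1$-least element lies in $U$, and $F$ is the $<_1$-monotone bijection from $U$ onto $R$. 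Since the monotone bijection between two equal-sized subsets of $[n]$ is unique, each unordered equal partition contributes exactly one triple, so $osp_{\mc C}(n,<_1)=E_{2,=}(n)$ for every choice of order, yielding counting order invariance even though the formula itself is order-dependent. Your proposal needs this device (or an equivalent one) to land in $\MSOL^2$.
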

In Section \ref{se:ex} we shall show in Corollary \ref{cor:Catalan} the same
also for the Catalan number.

However, if we require that the defining formula $\phi$ of a Specker function
is itself order invariant, i.e. $\phi \in \TVOIMSOL^2$, then the Specker-Blatter Theorem
still holds.

\begin{thm}
\label{th:mainSP}
Let $f$ be a $\TVOIMSOL^2$-Specker function.
Then, for all $m \in \N$ the function $f$ satisfies a modular linear recurrence
relation modulo $m$.
\end{thm}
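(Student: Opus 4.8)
The plan is to adapt the proof of the Specker-Blatter Theorem (Theorem \ref{th:BS}) rather than to invoke it as a black box, since no direct reduction is available. If one treats $<_1$ as an ordinary binary relation symbol and lets $\mc{K}'$ be the $\MSOL^2$-definable class of $\bar{R}\cup\{<_1\}$-structures satisfying $\phi(\bar{R},<_1)$ together with ``$<_1$ is a linear order'', then Theorem \ref{th:BS} applies to $\mc{K}'$, but $sp_{\mc{K}'}(n)=n!\cdot f(n)$ because each of the $n!$ orders contributes $osp_{\mc{D}}(n,<_1)=f(n)$ by t.v.o.i.; this is $\equiv 0 \pmod m$ for $n\geq m$ and yields no information about $f$. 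So the order must be handled internally. The starting observation is that, since $\phi$ is t.v.o.i., $f(n)$ equals the number of $\bar{R}$-structures on $[n]$ for which $\phi$ holds under the natural order $<_{nat}$; I would fix this order throughout and then use t.v.o.i. to replace it by whatever order is convenient for a given decomposition.

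First I would record the type machinery. Let $q$ be the quantifier rank of $\phi$ and let $\mathcal{T}$ be the finite set of $\MSOL$-types of quantifier rank $q$ of ordered $\bar{R}$-structures. Writing $t_\tau(n)$ for the number of $\bar{R}$-structures on $[n]$ whose expansion by $<_{nat}$ has type $\tau$, we have $f(n)=\sum_{\tau\models\phi}t_\tau(n)$, so it suffices to produce, for each $m$, a single modular linear recurrence satisfied simultaneously by all the $t_\tau$.

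Next I would set up a decomposition operation on ordered $\bar{R}$-structures along the lines of the substitution operation of Blatter and Specker. Given a structure $\mathfrak{A}$ with a distinguished point $a$ and a structure $\mathfrak{B}$, form $\mathfrak{A}[\mathfrak{B}/a]$ by replacing $a$ with a copy of $\mathfrak{B}$, letting $\mathfrak{B}$ occupy the interval of $a$ in the order and inheriting the binary edges incident to $a$; this is well defined precisely because every relation has arity at most $2$. The key composition lemma, a Feferman-Vaught statement for this operation, should assert that the type of $\mathfrak{A}[\mathfrak{B}/a]$ is a function of the types of $(\mathfrak{A},a)$ and of $\mathfrak{B}$. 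The order is itself a binary relation, but because $\mathfrak{B}$ is inserted as a contiguous interval its interaction across the boundary stays bounded, exactly as for the other binary relations; this is where t.v.o.i. is essential, as it lets me always arrange $<_1$ to be compatible with the decomposition without changing any truth value of $\phi$.

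Finally, the composition lemma turns the family $(t_\tau(n))_{\tau\in\mathcal{T}}$ into the orbit of an initial vector under $\mathcal{T}\times\mathcal{T}$ transition operators indexed by the inserted piece. Reducing modulo $m$, there are only finitely many such operators and finitely many vectors in $\Z_m^{\mathcal{T}}$, so the sequence of reduced type-count vectors is ultimately periodic, which is precisely a modular linear recurrence with constant coefficients for each $t_\tau$ and hence for $f$. The hard part will be the composition lemma in the ordered setting: one must check that inserting a block as an interval really keeps the $\MSOL$-type computable from the parts despite the presence of $<_1$, and that the induced transition data is itself eventually periodic modulo $m$. This eventual periodicity of the transition counts is the technical heart of the Specker-Blatter argument, and the sole purpose of order-invariance here is to guarantee that orders may always be chosen to make the decomposition well behaved, so that this heart can be inherited unchanged.
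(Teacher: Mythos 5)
Your opening move --- dismissing the reduction to $\MSOL^2$ on the grounds that $sp_{\mc{K}'}(n)=n!\cdot f(n)$ --- looks at the wrong invariant and causes you to miss the argument that actually works. The paper does treat $<_1$ as an ordinary binary relation symbol and does pass to the $\MSOL^2$-definable ordered class $\mc{C}'$, but it never applies Theorem \ref{th:BS} to $\mc{C}'$ to extract a recurrence for $sp_{\mc{C}'}$; it only uses Lemma \ref{lem:MSOLFiniteIndex} to conclude that $\mc{C}'$ has \emph{finite Specker index}, a qualitative structural property untouched by the $n!$ blow-up. It then transfers finiteness of the index down to the unordered class $\mc{C}$: if $\mc{A}_1\not\sim_{Su(\mc{C})}\mc{A}_2$ are witnessed by some pointed $\mc{G}$, then expanding $\mc{G},\mc{A}_1,\mc{A}_2$ by linear orders yields $Subst$-results whose orders extend those of the pieces, and truth-value order invariance of $\phi$ makes membership of these ordered results in $\mc{C}'$ equivalent to membership of their reducts in $\mc{C}$; hence infinitely many $\sim_{Su(\mc{C})}$-classes would force infinitely many $\sim_{Su(\mc{C}')}$-classes, a contradiction. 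Finally Lemma \ref{lem:FiniteIndex} (Specker's counting lemma for finite-index classes of arity at most $2$) is invoked as a black box on $\mc{C}$ itself, with no order appearing anywhere in the counting.

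By contrast, your plan re-derives the whole Specker--Blatter machinery in the ordered setting, and the step you defer is exactly the one that cannot simply be ``inherited unchanged''. First, ``finitely many transition operators and finitely many vectors in $\Z_m^{\mathcal{T}}$'' does not give ultimate periodicity: for a product $v_{n+1}=M_n v_n$ one also needs the sequence $(M_n)$ --- the mod-$m$ counts of inserted pieces of each type, together with the multiplicity with which the interval-decomposition enumerates each structure --- to be ultimately periodic; that is the content of Specker's counting lemma, and it is established for the unordered substitution operation, not for your interval insertion. Second, your bookkeeping is by rank-$q$ types of \emph{ordered} structures, while t.v.o.i.\ only guarantees invariance of the truth value of $\phi$, not of the full type $\tau$; so you may not ``replace $<_{nat}$ by whatever order is convenient'' without destroying the quantities $t_\tau(n)$ you are tracking, and if you instead fix $<_{nat}$ you must re-verify the enumeration under the constraint that blocks are intervals. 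None of this is obviously fatal, but as written the proposal assumes precisely the theorem-sized piece it needed to prove, while the short index-transfer argument was available all along.
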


\ifmarginB
\marginpar{Added table from slides}
\fi
Table \ref{table1} summarizes the relationship between definablity of a 
$\mc{L}^k$-Specker function $f(n)$ and existance of linear recurrence. 
We denote by $MLR$ that $f(n)$ has a modular linear recurrence 
(for every $m\in\mathbb{N})$ and by $LR$ that $f(n)$ satisfies a linear recurrence over $\Z$. 
We write $NO\,\,LR$ (respectively $NO\,\,MLR$) 
to indicate that there is some $\mc{L}^k$-Specker function without 
a linear recurrence over $\Z$ (respectively $\Z_m$, for some $m\in\N$). 
The entries in bold face are new.

\begin{table}[tpbh!]
{
\large
\begin{center}
\begin{tabular}{|c||c|c|c|c|}
\hline 
$k$ & $\MSOL^{k}$ & $C_{a,b}\MSOL^{k}$ & $\TVOIMSOL^{k}$ & c.o.i.$\MSOL^{k}$\tabularnewline
\hline
\hline 
4 &  $No\,\, MLR$ & $No\,\, MLR$ & $No\,\, MLR$ & $No\,\, MLR$\tabularnewline
\hline 
3 & ? & ? & ? & ?\tabularnewline
\hline 
2 & $\substack{MLR\\
\mathbf{(No\,\, LR)}}
$ & $\substack{MLR\\
\mathbf{(No\,\, LR)}}
$ & $\substack{MLR\\
\mathbf{(No\,\, LR)}}
$ & $\mathbf{No\,\, MLR}$ \tabularnewline
\hline 
1 & \multicolumn{4}{c|}{\textbf{All functions with $\mathbf{LR}$}}\tabularnewline
\hline
\end{tabular} \end{center} 
}
\caption{
Linear recurrences and definability of $\mc{L}^k$-Specker functions}
\label{table1}
\end{table}

\section{Linear Recurrence Relations for $\mc{L}^k$-Specker Functions}
\label{se:linrec}
\ifmarginB
\marginpar{Clarifying the structure of the proof}
\fi
To prove Theorem \ref{th:main1} we first introduce Specker polynomials and prove a generalized version of one direction of the theorem in subsection \ref{subse:Polynomials}. We finish this direction of the proof of Theorem \ref{th:main1} in subsection \ref{subse:Proof}. 
The other direction of Theorem \ref{th:main1} is easy and is also given in subsection \ref{subse:Proof}.  

\subsection{$\mc{L}^k$-Specker polynomials}
\label{subse:Polynomials}
\ifmarginB
\marginpar{Extended this section to include lemma 14 and Theorem 15}
\fi

\begin{definition} \
\ifmargin
\marginpar{Removed simple Specker polynomial}
\fi
\begin{renumerate}
 \item
A $\mathcal{L}^k$-Specker polynomial $A(n,\bar{x})$ in indeterminate set $\bar{x}$
has the form
\[
\sum_{R_{1}:\Phi_{1}(R_{1})}
\cdots\sum_{R_{t}:
\Phi_{t}(R_{1},\ldots,R_{t})}
\left(
\prod_{v_{1}, \ldots, v_{k}:
\Psi_{1}(\bar{R},\bar{v})}x_{m_1}\cdots
\prod_{v_{1}, \ldots, v_{k}:
\Psi_{l}(\bar{R},\bar{v})}x_{m_l}
\right)
\]
where $\bar{v}$ stands for $(v_1,\ldots,v_k)$, $\bar{R}$ stands for $\left(R_{1},\ldots,R_{t}\right)$ and the $R_i$'s are relation variables of arity $\rho_i$ at most $k$.  The $R_i$'s range over relations of arity $k$ over $[n]$ and the $v_i$ range over elements of $[n]$ satisfying the iteration formulas  $\Phi_{i},\Psi_i \in\mathcal{L}$.
\item
Simple ordered $\mathcal{L}^k$-Specker polynomials and order invariance
thereof are defined
analogously to Specker functions.
\end{renumerate}
\end{definition}

Every Specker function can be viewed as a Specker polynomial in zero indeterminates.
Conversely,
if we evaluate a Specker polynomial at $x=1$ we get a Specker function.

\ifmargin
\marginpar{Removed remark and lemma about graph polynomials}
\fi

In this subsection we prove a stronger version of Theorem \ref{th:main1}.
\begin{lem}
\ifmargin
\marginpar{Added this lemma}
\fi
\label{lem:EvalPol} Let $A(n,\bar{z})$ be a c.o.i. $\MSOL^{1}$-Specker polynomial 
with indeterminates $\bar{z}=(z_{1},\ldots,z_{s})$ and let
$h_{1}(\bar{w}),\ldots,h_{s}(\bar{w})\in\mathbb{Z}\left[\bar{w}\right]$. 
Let \\ $A\left(n,(h_1(\bar{w}),\ldots,h_s(\bar{w}))\right)$ denote the variable subtitution in
$A(n,\bar{z})$ where for $i\in\left[s\right]$,  $z_{i}$ is substituted to $h_{i}(\bar{w})$.
Then $A\left(n,\bar{h}\right)$ is an integer evaluation of a
c.o.i. $\MSOL^{1}$-Specker polynomial.\end{lem}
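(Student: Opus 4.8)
The plan is to show that applying a polynomial substitution $z_i \mapsto h_i(\bar w)$ to a c.o.i.\ $\MSOL^1$-Specker polynomial again produces (an evaluation of) such a polynomial, by reducing the substitution to operations that the Specker-polynomial format is already closed under. The key observation is that the defining format of a $\MSOL^1$-Specker polynomial $A(n,\bar z)$ consists of an outer block of sums $\sum_{R_i:\Phi_i}$ over unary relation variables, wrapping a product $\prod_{v:\Psi_j}z_{m_j}$ of indeterminates indexed by elements $v$ satisfying $\MSOL$ iteration formulas. A product $\prod_{v:\Psi_j(\bar R,v)} z_{m_j}$ equals $z_{m_j}^{N_j}$, where $N_j = |\{v : \Psi_j(\bar R, v)\}|$ is the number of elements realizing the formula. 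So substituting $z_{m_j} \mapsto h_{m_j}(\bar w)$ turns each atomic factor into $h_{m_j}(\bar w)^{N_j}$, and the whole bracketed product becomes a monomial-power expression in the $h_i(\bar w)$.

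First I would reduce to the two generating closure operations, namely that the class of c.o.i.\ $\MSOL^1$-Specker-polynomial evaluations is closed under sum and under product. Closure under sum is immediate: the sum of two such polynomials is obtained by introducing one extra unary relation variable $R_0$ that acts as a selector, with an iteration formula $\Phi_0$ forcing $R_0$ to be either empty or all of $[n]$, and then conditioning the inner product on which case holds; the outer $\sum_{R_0:\Phi_0}$ then adds the two summands. Closure under product is handled by taking the disjoint juxtaposition of the two relation-variable blocks and concatenating the two inner products, which the format permits directly since products of products are products. Since each $h_i(\bar w)\in\mathbb{Z}[\bar w]$ is itself built from the indeterminates $w_1,\dots,w_r$ (and integer constants) by finitely many sums and products, I would then argue by induction on the construction of the $h_i$ that every such polynomial value is realizable as an evaluation of a c.o.i.\ $\MSOL^1$-Specker polynomial.

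Concretely, I would first establish that each indeterminate $w_\ell$ and each integer constant $c$ is trivially the evaluation of such a polynomial (a constant $c\in\mathbb{Z}$ is obtained as a difference-free integer coefficient absorbed into an empty product summed the requisite number of times, using the sum-closure to build arbitrary integers), then invoke the sum- and product-closure established above to realize arbitrary $h_i(\bar w)$. Substituting these realizations for the $z_i$ inside the original $A(n,\bar z)$ composes the relation-variable blocks: the outer summation variables of $A$ together with the auxiliary variables introduced for each $h_i$ form one enlarged block $\bar R'$, and the inner products compose into a single product, all governed by $\MSOL$ iteration formulas. The \emph{order invariance} is inherited throughout: each constituent is c.o.i.\ by hypothesis or by construction, and neither the sum nor the product operation introduces any dependence on the linear order beyond what the iteration formulas already contain, so the counting value is independent of $<_1$.

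The main obstacle I expect is \textbf{bookkeeping the nesting of the iteration formulas $\Phi_i,\Psi_j$ so that the composed object stays within $\MSOL^1$} — in particular, ensuring that when an inner product $\prod_{v:\Psi_j}z_{m_j}$ is replaced by a realization of $h_{m_j}(\bar w)$ carrying its own summation block, the new relation variables range correctly and the combined iteration formulas remain $\MSOL$-definable over the single structure $\langle[n],<_1,\bar R'\rangle$ without arity blow-up past $1$. Handling integer (as opposed to natural-number) coefficients requires care as well, since the $h_i$ may take negative values and the raw Specker-polynomial format counts nonnegative quantities; here I would lean on the fact that the statement asks only for an \emph{integer evaluation}, so negative contributions are permitted in the final value even though each atomic counting block is nonnegative, exactly as a difference of two such blocks.
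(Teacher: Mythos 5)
Your reduction identifies the right starting point (each factor $\prod_{v:\Psi_j}z_{m_j}$ is $z_{m_j}^{N_j}$ with $N_j=|\{v:\Psi_j(\bar R,v)\}|$), but the closure operations you then develop are not the ones the substitution actually requires, and this leaves a genuine gap. Top-level sum closure (via a selector relation $R_0$) and top-level product closure (via juxtaposition of relation blocks) let you realize any \emph{fixed} polynomial $h_i(\bar w)$ as a Specker polynomial; they do not let you form $\prod_{v:\Psi_j(\bar R,v)} h_i(\bar w)$, i.e.\ the power $h_i(\bar w)^{N_j}$ where the exponent $N_j$ varies with the outer summation variables $\bar R$. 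Pushing a product over a formula-defined set through the sum $h_i(\bar w)=\sum_{j=1}^{d}c_j w_1^{\alpha_{j1}}\cdots w_t^{\alpha_{jt}}$ means choosing one monomial independently for each element $v$ satisfying $\Psi$, and your outline never says how to encode these choices. This is exactly the step the paper's proof supplies: it introduces $d$ new unary relation variables $U_1,\ldots,U_d$ together with an $\MSOL$-definable iteration formula $\phi_{Part(\Psi)}(\bar U)$ asserting that $\bar U$ partitions $\{v:\Psi(\bar R,v)\}$, so that summing over such partitions and multiplying $\prod_{v\in U_j}c_j w_1^{\alpha_{j1}}\cdots w_t^{\alpha_{jt}}$ realizes the multinomial expansion of $h_i(\bar w)^{N}$ inside the Specker-polynomial format (each $\prod_{v:\theta}c_j\bar w^{\alpha_j}$ is then flattened into repeated products of single indeterminates). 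Without this partition device, your induction on the structure of $h_i$ never engages with the exponent $N_j$ and the argument does not go through.

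A secondary point: your treatment of the integer coefficients via ``sum-closure to build arbitrary integers'' is more complicated than necessary and somewhat unclear for negative $c_j$. The paper simply replaces each constant $c_j$ by a fresh indeterminate $w_j'$ and observes that the original expression is recovered as the integer evaluation $w_j'=c_j$ of a genuine c.o.i.\ $\MSOL^1$-Specker polynomial, which is all the lemma claims. Your closing remark that only an integer evaluation is required is in the right spirit, and your observation that order invariance is preserved is correct, but the core combinatorial mechanism of the proof is missing.
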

\begin{proof}
We look at $A(n,\bar{z})$ with $z_{1}$ substituted to the polynomial
$$
h_{1}(\bar{w})=\sum_{j=1}^{d}c_{j}w_{1}^{\alpha_{j1}}\cdots w_{t}^{\alpha_{jt}}
$$
where $d,\alpha_{11},\ldots,\alpha_{dt}\in\mathbb{N}$ and $c_{1},\ldots,c_{d}\in\mathbb{Z}$.
The c.o.i. $\MSOL^{1}-$Specker polynomial $A(n,\bar{z})$ is given by
\[
\sum_{R_{1}:\Phi_{1}(R_{1})}\cdots\sum_{R_{m}:\Phi_{m}(R_{1},\ldots,R_{m-1})}\left(\prod_{v_{1}:\Psi_{1}(\bar{R},v_{1})}z_{1}\cdots\prod_{v_{1}:\Psi_{s}(\bar{R},v_{1})}z_{s}\right)
\]
so substituting $z_1$ to  $h_{1}(\bar{w})$ we get $A(n,(h_1(w),z_2,\ldots,z_s))=$
\[
\sum_{R_{1}:\Phi_{1}(R_{1})}\cdots\sum_{R_{m}:\Phi_{m}(R_{1},\ldots,R_{m-1})}\left(\prod_{v_{1}:\Psi_{1}(\bar{R},v_{1})}h_{1}(\bar{w})\cdots\prod_{v_{1}:\Psi_{s}(\bar{R},v_{1})}z_{s}\right).
\]
We note that for every $\alpha(v)\in\MSOL$ we can define an $\MSOL$ formula with $d$ unary relation
variables $\phi_{Part(\alpha)}(U_{1}\ldots,U_{d})$ which holds iff $U_{1},\ldots,U_{d}$
are a partition of the set of elements of $[n]$ which satisfy $\alpha(v)$. Then \\
$A\left(n,\left(h_{1}(\bar{w}),z_{2},\ldots,z_{s}\right)\right)=$
\begin{eqnarray*}
\sum_{R_{1}:\Phi_{1}(R_{1})}\cdots\
\sum_{R_{m}:\Phi_{m}(R_{1},\ldots,R_{m-1})}
\sum_{U_{1},\ldots,U_{d}:
\phi_{Part(\Psi_1)}(\bar{U})}
\left(\prod_{v_{1}:\Psi_{2}(\bar{R},v_{1})}z_{2}\cdots\right.\\
\left.\prod_{v_{1}:\Psi_{s}(\bar{R},v_{1})}z_{s}
\prod_{v_{1}:v_{1}\in U_{1}}c_{1}w_{1}^{\alpha_{11}}\cdots w_{t}^{\alpha_{1t}}\cdots\prod_{v_{1}:v_{1}\in U_{d}}c_{d}w_{1}^{\alpha_{d1}}\cdots w_{t}^{\alpha_{dt}}\right)
\end{eqnarray*}
Next, we note for any formula $\theta,$ \[
\prod_{v_{1}:\theta}c_{j}w_{1}^{\alpha_{j1}}\cdots w_{t}^{\alpha_{jt}}=\prod_{v_{1}:\theta}c_{j}\stackrel{\alpha_{j1}\mbox{ times}}{\overbrace{\prod_{v_{1}:\theta}w_{1}\cdots\prod_{v_{1}:\theta}w_{1}}}\cdots\stackrel{\alpha_{jt}\mbox{ times}}{\overbrace{\prod_{v_{1}:\theta}w_{t}\cdots\prod_{v_{1}:\theta}w_{t}}}\]
We now replace all $c_{j}$ with new indeterminates $w_{j}'$ and thus obtain that \\
$A\left(n,\left(h_{1}(\bar{w}),z_{2},\ldots,z_{s}\right)\right)$
is an evaluation of an c.o.i. $\MSOL^{1}$-Specker polynomial.

Doing the same for the other $z_{i}$ we get that $A\left(n,\left(h_{1}(\bar{w}),\ldots,h_{s}(\bar{w})\right)\right)$
is an evaluations of an o.i. $\MSOL^{1}$-definable Specker polynomial,
as required. \end{proof}
\begin{thm}
\label{th:main2}
\ifmargin
\marginpar{Theorem statement somewhat altered. Proof rewritten}
\fi
 Let $A_n(\bar{x})$ 
be a sequence of polynomials with a finite indeterminate set
$\bar{x}=\left(x_{1},\ldots,x_{s}\right)$
which satisfies a linear recurrences over
$\mathbb{Z}$. Then there exists a c.o.i $\MSOL^1$-Specker polynomial $A'(n,\bar{x},\bar{y)}$
such that $A_n(\bar{x})=A'(n,\bar{x},\bar{a})$
where $\bar{a}=(a_{1},\ldots,a_{l})$ and $a_{i}\in\mathbb{Z}$ for
$i=1,\ldots,l$. \end{thm}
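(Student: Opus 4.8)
The plan is to realize the recurrence by a transfer (companion) matrix and then to read the resulting matrix power off an $\MSOL^1$-Specker polynomial, keeping the actual integer and polynomial coefficients as indeterminates and reinstating them only at the very end by means of Lemma~\ref{lem:EvalPol}. The point of keeping them as indeterminates is that the matrix entries may be \emph{negative}, so they cannot be absorbed as a combinatorial count; this is exactly why the statement is about $\Z$-recurrences.

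First I would put the recurrence into companion form. By hypothesis there are $c_1,\dots,c_d\in\Z$ and an $N\in\N$ with $A_n(\bar x)=\sum_{j=1}^d c_j A_{n-j}(\bar x)$ for $n\ge N$. Letting $C$ be the $d\times d$ companion matrix of $(c_1,\dots,c_d)$ over $\Z$ and $\bar V_n=(A_n,\dots,A_{n-d+1})^{T}$, one has $\bar V_n=C\,\bar V_{n-1}$, so that for all large $n$ one obtains $A_n(\bar x)=\mathbf u^{T}C^{\,n-1}\mathbf w(\bar x)$ for a fixed integer vector $\mathbf u$, the integer matrix $C$, and a vector $\mathbf w(\bar x)$ of polynomials in $\Z[\bar x]$ built from the initial data (the constant shift in the exponent and the choice of boundary vectors being routine). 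The finitely many small values $n<N$ I would absorb into a separate finitely supported correction: for each such $n$ the single polynomial $A_n(\bar x)$ is an integer evaluation of a c.o.i.\ $\MSOL^1$-Specker polynomial, since ``the universe has exactly $n$ elements'' is $\FOL$-definable; these terms and the main term are then combined using closure of c.o.i.\ Specker polynomials under sums, which is immediate by introducing a Boolean selector relation.

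The heart of the argument is to exhibit, for an indeterminate matrix $Y=(y_{pq})_{p,q\in Q}$ with $Q=\{1,\dots,d\}$ and indeterminate boundary weights $\mathbf u,\mathbf w$, a c.o.i.\ $\MSOL^1$-Specker polynomial whose value is the bilinear form $\mathbf u^{T}Y^{\,n-1}\mathbf w$. I encode a state labelling $\ell\colon[n]\to Q$ of the $n$ ordered positions by a quantified partition $R_1,\dots,R_d$ of the universe, and I read each matrix power as a product over positions of the transition weights taken along the successor relation of $<_1$. Concretely I would take
\[
\sum_{R_1,\dots,R_d:\,\mathrm{Part}}\Bigl(\prod_{q}\prod_{v:\,v=\min\wedge v\in R_q}u_q\Bigr)\Bigl(\prod_{(p,q)}\prod_{v:\,v\in R_p\wedge\mathrm{succ}(v)\in R_q}y_{pq}\Bigr)\Bigl(\prod_{q}\prod_{v:\,v=\max\wedge v\in R_q}w_q\Bigr),
\]
where $\mathrm{Part}$, ``$v=\min$'', ``$v=\max$'' and $\mathrm{succ}(v)$ are the obvious $\MSOL$ (indeed $\FOL$) formulas over $<_1$. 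For a fixed labelling the three bracketed factors contribute exactly $u_{\ell(\min)}$, then $\prod_{(p,q)}y_{pq}^{N_{pq}}=\prod_{v\neq\max}y_{\ell(v)\,\ell(\mathrm{succ}\,v)}$, and finally $w_{\ell(\max)}$, so summing over all labellings telescopes to $\mathbf u^{T}Y^{\,n-1}\mathbf w$. This value depends only on $n$ and not on the chosen order, because every linear order is order-isomorphic to the standard one and the telescoping product is just the ordinary matrix product; hence the polynomial is counting order invariant, and a routine reindexing of positions (absorbed into the finite correction above) matches its value to $A_n$.

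Finally I substitute $y_{pq}\mapsto C_{pq}\in\Z$, $u_q\mapsto u_q\in\Z$ and $w_q\mapsto w_q(\bar x)\in\Z[\bar x]$. The integer substitutions are plain evaluations, while the polynomial substitutions of the $w_q(\bar x)$ fall exactly under Lemma~\ref{lem:EvalPol}, which returns a c.o.i.\ $\MSOL^1$-Specker polynomial in the retained indeterminates $\bar x$ together with fresh indeterminates $\bar y$ set to the integer coefficients $\bar a$. Adding the finite initial correction yields the desired $A'(n,\bar x,\bar y)$ with $A_n(\bar x)=A'(n,\bar x,\bar a)$. I expect the main obstacle to be the two conceptual points in the previous paragraph: verifying that the successor-based product genuinely computes the matrix power uniformly in the order (so that the construction is counting order invariant), and handling the possibly negative entries $C_{pq}$. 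The latter is precisely why the weights are carried as indeterminates $\bar y$ and only evaluated at the integers $\bar a$ at the end, where the products $\prod_{v:\Psi}y_{pq}$ supply the required signs that an honest $\N$-valued Specker function could never produce.
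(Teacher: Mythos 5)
Your proof is correct, but it realizes the recurrence by a different combinatorial encoding than the paper. The paper unrolls the recurrence directly: it sums over paths in the recurrence tree, encoded by unary relations $U_1,\dots,U_r$ (positions where a step of size $i$ is taken), $I_1,\dots,I_r$ (which initial condition is hit) and $S$ (skipped positions), with an $\MSOL$ formula $\phi_{rec}$ enforcing path validity; the weight of a path is $\prod_{v\in U_i}z_i\cdot\prod_{v\in I_j}z_{r+j}$, and Lemma~\ref{lem:EvalPol} then substitutes $z_i\mapsto f_i(\bar x)$ and $z_{r+j}\mapsto A_j(\bar x)$. You instead pass to the companion matrix and encode $\mathbf u^{T}Y^{n-1}\mathbf w$ by a total state labelling of $[n]$ (a partition $R_1,\dots,R_d$) with transition weights read along the successor relation; this is the standard linear-representation view of rational series, and it makes the order invariance and the telescoping identity transparent, at the cost of having to splice in the finitely many initial values separately (a step the paper's $\phi_{rec}$ also glosses over for $n\le r$). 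Both proofs share the essential idea: carry all coefficients as indeterminates so that negative and polynomial values never have to be counted, and reinstate them only at the end via Lemma~\ref{lem:EvalPol}. One caveat: the paper's notion of a linear recurrence for a sequence of polynomials allows coefficients $f_i(\bar x)\in\Z[\bar x]$, whereas you assumed constant integer coefficients $c_j\in\Z$; this costs you nothing, since your matrix entries are indeterminates $y_{pq}$ and Lemma~\ref{lem:EvalPol} substitutes polynomials into them just as it does into your boundary weights $w_q(\bar x)$, but you should state the hypothesis in that generality.
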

\begin{proof}
Let $A_n(\bar{x})$ be given by a linear recurrence\[
A_n(\bar{x})=\sum_{i=1}^{r}f_{i}(\bar{x})\cdot A_{n-i}(\bar{x}),\]
 where $f_{i}(\bar{x})\in\mathbb{Z}$$\left[\bar{x}\right]$ and initial
conditions $A_1(\bar{x}),\ldots,A_r(\bar{x})\in\mathbb{Z}\left[\bar{x}\right]$.
To write $A_n(\bar{x})$ as a c.o.i. $\MSOL^{1}$-Specker
polynomials, we sum over the paths of the recurrence tree. A path
in the recurrence tree corresponds to the successive application of
the recurrence $A_n(\bar{x})\to A_{n-i_{1}}(\bar{x})\to A_{n-i_1-i_{2}}(\bar{x})\to\cdots\to A_{n-i_1-\ldots-i_{l}}(\bar{x})$
where $i_{1},\ldots,i_{l}\in[r]$ and $A_{n-i_1-\ldots-i_{l}}(\bar{x})$ is an
initial condition. 

In the following, the $U_{i}$ for $i\in[r]$ stand for the vertices
in the path, $I_{i}$ for $i\in[r]$ stand for initial conditions
$A_i(\bar{x})$, and $S$ stands for all those elements of $[n]$ skipped by
the recurrence. We may write $A_n(\bar{x})$ as \[
A_n(\bar{x})=\sum_{\bar{U},\bar{I},S:\phi_{rec}(\bar{U},\bar{I},S)}\prod_{v:v\in U_{1}}f_{1}(\bar{x})\cdots\prod_{v:v\in U_{r}}f_{r}(\bar{x})\prod_{v:v\in I_{1}}A(1,\bar{x})\cdots\prod_{v:v\in I_{r}}A(r,\bar{x})\]
 where $\phi_{rec}(\bar{U},\bar{I},S)$ says 
\begin{itemize}
\item $\phi_{Part}(\bar{U},\bar{I},S)$ holds, i.e. $\bar{U},\bar{I},S$
is a partition of $[n]$,
\item $n\in\bigcup U_{i}$, i.e. the path in the recurrence tree starts
from $n$, 
\item $\left|\bigcup_{i=1}^{r}I_{i}\right|=1$, i.e. the path reaches exactly
one initial condition
\item if $v\in[n]-[r]$, then $v\notin\bigcup_{i=1}^{r}I_{i}$, i.e. the
path may not reach an initial condition until $v\in[r]$, 
\item if $v\in[r]$, then $v\notin\bigcup_{i=1}^{r}U_{i}$, i.e. the path
ends when reaching the initial conditions, and
\item for every $v\in U_{i}$, $\left\{ v-1,\ldots,v-(i-1)\right\} \subseteq S$
and $v-i\in\bigcup_{i=1}^{r}\left(U_{i}\cup I_{i}\right)$, i.e. the
next element in the path is $v-i$. 
\end{itemize}
The formula $\phi_{rec}$ is $\MSOL$ definable using the given order. Let $B(n,\bar{x})$
be \[
B(n,\bar{z})=\sum_{\bar{U},\bar{I},S:\phi_{rec}(\bar{U},\bar{I},S)}\prod_{v:v\in U_{1}}z_{1}\cdots\prod_{v:v\in U_{r}}z_{r}\prod_{v:v\in I_{1}}z_{r+1}\cdots\prod_{v:v\in I_{r}}z_{2r}\]
 then $B(n,\bar{z})$ is a c.o.i. $\MSOL^{1}$-Specker polynomial.
By Lemma \ref{lem:EvalPol}, substituting $z_{i}$ to $f_{i}(\bar{x})$
for $i\in[r]$ and to $A_{i-r}(\bar{x})$ for $i\in[2r]\backslash[r]$,
we have that $B\left(n,\left(f_{1}(\bar{x}),\ldots,f_{r}(\bar{x}),A(1,\bar{x}),\ldots,A(r,\bar{x})\right)\right)=A_n(\bar{x})$
is an evaluation in $\mathbb{Z}$ of a c.o.i. $\MSOL^{1}$-Specker polynomial.\end{proof}

\subsection{Proof of Theorem \ref{th:main1}}
\label{subse:Proof}
\ifmarginB
\marginpar{made the easy direction more ordered}
\fi

Let $f=f_1 - f_2$ and $f_1$ and $f_2$ be c.o.i $\MSOL^1$-Specker functions. 
By Proposition \ref{prop:CS} together with
Theorem \ref{th:buchi} we have that $f_1$ and $f_2$ satisfy
linear recurrence relations over $\mathbb{Z}$.
It is well known that finite sums, differences and products
of functions satisfying a linear recurrence relation
again satisfy a linear recurrence relation, cf. 
\cite[Chapter 8]{bk:LidlNiederreiter} or \cite[Chapter 6]{bk:Sidi03}.
Thus, $f =f_1 - f_2$  satisfies a linear recurrence relation over $\Z$. 

\ifmargin
\marginpar{Proof amended due to referees comments}
\fi
Conversely, if $f$ satisfies a linear recurrence relation over $\Z$, 
then by Theorem \ref{th:main2}, $f$ is given by an evaluation 
$\bar{a}=(a_1,\ldots,a_l)$ where $a_i\in\Z$ for $i=1,\ldots,l$ of a
c.o.i. $\MSOL^1$ Specker polynomial $A(n, \bar{y})$ in variables $y_i$.
We have to show that $f$ is a difference of two c.o.i. $\MSOL^1$-Specker functions. 
For the sake of simplicity we will show this only for the case of a 
$\MSOL^1$-Specker polynomial in one indeterminate,
$$
A(n,y)=\sum_{R:\Phi(R)}\prod_{v_1:\Psi(R,v_1)} y
$$
The general case is similar. 
We may write $A(n,y)$ as
\[
A(n,y)=\sum_{R,Y:\Phi(R)\land \Psi'(R,Y)}\prod_{v:v\in Y} y
\]
where  $\Psi'(Y)=\forall v \left( v\in Y \leftrightarrow \Psi(R,v) \right)$. 
For  $a>0$  we can write $\prod_{v:v\in Y} a$
as 
\[
\prod_{v:v\in Y} a=a^{|Y|}=
\left|\left\{ Z_{1},\ldots Z_{a}\mid  Z_{1},\ldots,Z_{a} \mbox{ form a partition of } Y   \right\} \right|.
\]
So, 
\[
A(n,a)=
\sum_{R,Y,\bar{Z} : \beta_a(R,Y,\bar{Z})} 1=
\left|\left\{R,Y,\bar{Z} \mid \beta_a(R,Y,\bar{Z})\right\}\right|
\]
where $\beta_a(R,Y,\bar{Z})=\Phi(R)\land \Psi'(R,Y)\land \phi_{part}(Y,\bar{Z})$ and $\phi_{part}(Y,Z_1,\ldots,Z_a)$ says $Z_1,\ldots.Z_a$ form a partition of $Y$.  We note that $\phi_{part}$ is definable in $\MSOL$. For $a=0$, 
\[
A(n,a)=\sum_{R:\gamma(R)} 1=\left| \left\{ R \mid \gamma(R)\right\}  \right|.,
\]
where $\gamma (R)=\Phi(R)\land \forall v_1 \neg \Psi(R,v_1)$. 
Thus, since the constant function $0$ is definable in $\MSOL$, we get that if $a\geq 0$ then $A(n,a)$ is the difference of two c.o.i $\MSOL^1$-Specker functions. 

For $a<0$ we have
\[
A(n,a)=\sum_{R,Y:\Phi(R)\land \Psi'(R,Y)}\prod_{v:v\in Y} |a| \prod_{v: v\in Y} (-1). 
\]
As above, we may write $A(n,a)$ as
\[
A(n,a)=\sum_{R,Y,\bar{Z}:\beta_{|a|}(R,Y,\bar{Z})} \prod_{v:v\in Y} (-1) 
\]
and we have
\[
A(n,a)=\sum_{R,Y,\bar{Z}:      \alpha_{Even}(Y) \land \beta_{|a|}({R,Y,\bar{Z}})} 1 -
       \sum_{R,Y,\bar{Z}: \neg \alpha_{Even}(Y) \land \beta_{|a|}({R,Y,\bar{Z}})} 1, 
\]
where $\alpha_{Even}(Y)$ says $|Y|$ is even. Thus, $A(n,a)$ is given by $A(n,a)=$
$$
\left|\left\{R,Y,\bar{Z} \mid  \alpha_{Even}(Y) \land \beta_{|a|}({R,Y,\bar{Z}})\right\}\right| -
\left|\left\{R,Y,\bar{Z} \mid   \neg\alpha_{Even}(Y) \land \beta_{|a|}({R,Y,\bar{Z}})\right\}\right|
$$
Since $\alpha_{Even}$ is definable in $\MSOL$ given an order, as discussed in example \ref{ex:order}, we get that $A(n,a)$ is a  difference of two c.o.i $\MSOL^1$-Specker functions for $a<0$.

\section{Modular Linear Recurrence Relations}
\label{se:modlinrec}
\ifmargin
\marginpar{Mostly unchanged}
\fi

In this section we prove Theorem \ref{th:mainSP},
 the extension of the Specker-Blatter Theorem to $\TVOIMSOL^2$-Specker functions. 
We also prove Proposition \ref{pr:E2eq}, which shows Theorem \ref{th:mainSP} cannot 
be extended to c.o.i. $\MSOL^2$-Specker functions. 

\subsection{Specker Index}
We say a structure $\mc A=\left\langle [n],\bar{R},a\right\rangle $
is a pointed $\bar{R}$-structure if is consists of a universe $[n]$,
relations $R_{1},\ldots,R_{k}$, and an element $a\in [n]$ of the universe.
We now define a binary operation on pointed structures. Given two
pointed structures $\mc A_{1}=\left\langle [n_{1}],\bar{R}^{1},a_{1}\right\rangle $
and $\mc A_{2}=\left\langle [n_{2}],\bar{R}_{}^{2},a_{2}\right\rangle $,
let $Subst\left(\mc A_{1},\mc A_{2}\right)$ be a new pointed structure
$Subst(\mc A_{1},\mc A_{2})=\mc B$ where $\mc B=\left\langle [n_{1}]\sqcup[n_{2}]-\left\{ a_{1}\right\} ,\bar{R},a_{2}\right\rangle $,
such that the relations in $\bar{R}$ are defined as follows. For every
$R_{i}\in\bar{R}$ of arity $r$, $R_{i}=\left(R_{i}^{1}\cap\left([n_{1}]-\left\{ a_{1}\right\} \right)^{r}\right)\cup R_{i}^{2}\cup I$,
where $I$ contains all possibilities of replacing occurrences of
$a_{1}$ in $R^1_{i}$ with members of $[n_{2}]$. 

Similarly, we define $Subst(\mc A_{1},\mc A_{2})$ for a pointed structure
$\mc A_{1}$ and a structure $\mc A_{2}=\left\langle [n],\bar{R}\right\rangle $
(which is not pointed). Let $\mc C$ be a class of possibly pointed
$\bar{R}-$structures. We define an equivalence relation between $\bar{R}-$structures:
\begin{itemize}
\item We say $\mc A_{1}$ and $\mc A_{2}$ are equivalent, denoted $\mc A_{1}\sim_{Su(\mc C)}\mc A_{2}$
if for every pointed structure $\mc D$ we have that $Subst(\mc D,\mc A_{1})\in\mc C$
if and only if $Subst\left(\mc D,\mc A_{2}\right)\in\mc C$. 
\item The \emph{Specker index} of $\mc C$ is the number of equivalence
classes of $\sim_{Su(\mc C)}$. 
\end{itemize}
We use in the next subsection the following lemmas by Specker \cite{ar:Specker88}:
\begin{lem}
\label{lem:FiniteIndex} Let $\mc C$ be a class of $\bar{R}-$structures
of finite Specker index with all relation symbols in $\bar{R}$
at most binary. Then $f_{\mc C}(n)$ satisfies modular linear recurrence
relations for every $m\in\mathbb{N}.$
\end{lem}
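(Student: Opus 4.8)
The plan is to count structures class by class and reduce the problem to the behaviour of Pascal's triangle modulo $m$. First I would observe that $\mc C$ is a union of $\sim_{Su(\mc C)}$-classes: taking $\mc D$ to be the one-element pointed structure, $Subst(\mc D,\mc A)=\mc A$, so $\mc A_{1}\sim_{Su(\mc C)}\mc A_{2}$ already forces $\mc A_{1}\in\mc C\Leftrightarrow\mc A_{2}\in\mc C$. Writing the finitely many classes as $E_{1},\dots,E_{N}$ and setting $a_{i}(n)=|\{\mc A : |\mc A|=n,\ \mc A\in E_{i}\}|$, we get $f_{\mc C}(n)=\sum_{E_{i}\subseteq\mc C}a_{i}(n)$, so it suffices to show that each $a_{i}$ is ultimately periodic over $\Z_m$. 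Along the way I would introduce the analogous equivalence on pointed structures together with its counting functions and check that it too has finite index.

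The enabling algebraic fact is that $\sim_{Su(\mc C)}$ is a congruence for $Subst$. I would first verify that $Subst$ is associative, $Subst(\mc D,Subst(\mc E,\mc A))=Subst(Subst(\mc D,\mc E),\mc A)$, which is immediate from the ``blow up the point into the second structure'' description. Associativity then yields the congruence almost for free: if $\mc A_{1}\sim_{Su(\mc C)}\mc A_{2}$ then for any pointed $\mc D,\mc E$ we have $Subst(\mc D,Subst(\mc E,\mc A_{1}))=Subst(Subst(\mc D,\mc E),\mc A_{1})\in\mc C$ iff the same holds for $\mc A_{2}$, so $Subst(\mc E,\mc A_{1})\sim_{Su(\mc C)}Subst(\mc E,\mc A_{2})$, and symmetrically on the pointed argument. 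Hence $Subst$ descends to a well-defined multiplication on the finite set of classes, turning the classes into a finite algebra and giving the ``finite-state'' backbone of the counting recurrence.

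Next I would set up a recurrence for the vector $(a_{1}(n),\dots,a_{N}(n))$ by decomposing each $n$-element structure through $Subst$, peeling off the block that a distinguished point gets blown up into. Because a substituted block attaches uniformly to the rest of the structure, reconstructing a structure from such a decomposition is determined by the class of the context, the class of the block, and the choice of which $k$ of the $n$ elements constitute the block; this last choice contributes a factor $\binom{n}{k}$. This is exactly the step where the hypothesis that all relations are at most binary is essential: for arity $\le 2$ the interaction of a block with each outside element is described by a bounded amount of data, so the decomposition is governed by the finite class algebra, whereas for higher arity (cf. E. Fischer's example) no such finite description exists and the conclusion fails. The outcome is a system expressing each $a_{i}(n)$ as a finite combination, with binomial coefficients, of products of the $a_{j}$ and the pointed counts at smaller arguments.

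The hard part will be passing from this binomial recurrence to a genuine linear recurrence over $\Z_m$, since the block size $k$ ranges up to $n$ and the convolution is not of bounded length. Here I would use that for each fixed $k$ the sequence $\binom{n}{k}\bmod m$ is periodic in $n$, and more globally that the rows of Pascal's triangle modulo $m$ have a self-similar, eventually periodic structure. Combined with the finiteness of the class algebra, this lets one regard the residues modulo $m$ of the relevant counts together with the active binomial residues as a single state taking finitely many values; a pigeonhole/pumping argument on this state then yields that each $a_{i}(n)$, and hence $f_{\mc C}(n)$, is ultimately periodic over $\Z_m$, i.e. satisfies a modular linear recurrence for every $m\in\mathbb{N}$. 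Controlling the unbounded convolution uniformly in $m$ is the delicate point, and is precisely what the periodicity of the binomial coefficients modulo $m$ is there to supply.
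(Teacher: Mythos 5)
First, a point of reference: the paper does not prove this lemma at all --- it is quoted from Specker \cite{ar:Specker88} --- so your attempt has to be measured against that argument rather than against anything in the text. Your first two paragraphs reproduce genuine ingredients of it: $\mc{C}$ is a union of $\sim_{Su(\mc C)}$-classes, $Subst$ is associative, and $\sim_{Su(\mc C)}$ is a congruence for substitution, so the finitely many classes carry a finite algebraic structure. The genuine gap is in your third paragraph, and it is fatal to the plan as written. You cannot ``decompose each $n$-element structure through $Subst$ by peeling off a block'': a structure on $[n]$ is of the form $Subst(\mc D,\mc B)$ with $|\mc B|\ge 2$ only if all elements of the block relate identically to every element outside it (that is exactly what the set $I$ in the definition of $Subst$ enforces), and a generic binary structure has no such module. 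So the proposed sum over block sizes $k$ with factors $\binom{n}{k}$ is not an identity for $a_i(n)$: it misses all the non-decomposable structures, and where decompositions do exist it counts structures with several modules more than once. Your fourth paragraph is then forced to fight an unbounded binomial convolution that should never have arisen, and the pigeonhole/Pascal's-triangle sketch offered there is not a proof --- periodicity of $\binom{n}{k}\bmod m$ in $n$ for each fixed $k$ does not control a sum whose length grows with $n$.

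The idea you are missing is that one does not decompose every structure; one arranges for the non-decomposable ones to cancel. Reduce to prime-power moduli by the Chinese Remainder Theorem, fix a designated block of elements of the universe, and let a transitive $p$-group act on the structures in $\mc C$ by permuting that block (isomorphism-invariance of $\mc C$ is used here). Orbits of size greater than one contribute $0$ modulo $p$ (prime powers require an elaboration of the same orbit count), and the fixed structures are exactly those in which the block is uniform, i.e.\ exactly the images $Subst(\mc D,\mc B)$; this identification of fixed points with substitution instances is precisely where the hypothesis of arity at most $2$ enters, as you correctly sensed. Since membership of $Subst(\mc D,\mc B)$ in $\mc C$ depends only on $\mc D$ and on the $\sim_{Su(\mc C)}$-class of $\mc B$, finiteness of the Specker index turns the fixed-point count into a bounded linear combination of counts at smaller arguments --- no unbounded convolution --- and ultimate periodicity of the vector of class counts modulo $m$ follows. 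So: keep your first two paragraphs, discard the decomposition-of-everything, and replace it with the orbit-counting argument.
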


\begin{lem}
\label{lem:MSOLFiniteIndex}If $\mc C$ is a class of $\bar{R}$-structures
which is $\MSOL^2-$definable, then $\mc C$ has finite Specker index. 
\end{lem}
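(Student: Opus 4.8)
The plan is to bound the Specker index of $\mc C$ by the number of $\MSOL$-types of a fixed quantifier rank, using a Feferman--Vaught-style composition theorem for the operation $Subst$. Fix an $\MSOL$-sentence $\varphi$ with $\mc C=\{\mc A:\mc A\models\varphi\}$ and let $q$ be its quantifier rank. For an $\bar{R}$-structure $\mc A$ write $\tau_q(\mc A)$ for its rank-$q$ $\MSOL$-type, i.e. the set of $\MSOL$-sentences of quantifier rank at most $q$ that hold in $\mc A$; for a pointed structure $\mc D=\langle [m],\bar{R},a\rangle$ write $\tau_q(\mc D)$ for the analogous type in which the point $a$ is named by a free element variable. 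Since $\bar{R}$ is finite, up to logical equivalence there are only finitely many $\MSOL$-formulas of quantifier rank at most $q$ in the relevant free variables, so $\tau_q$ takes only finitely many values.

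The heart of the argument is the composition claim: the type $\tau_q(Subst(\mc D,\mc A))$ is a function of the pair $(\tau_q(\mc D),\tau_q(\mc A))$ alone. I would prove this through the Ehrenfeucht--Fra\"iss\'e game characterisation of $\MSOL$-equivalence up to rank $q$. Suppose $\tau_q(\mc A_1)=\tau_q(\mc A_2)$; it suffices to show that Duplicator wins the rank-$q$ $\MSOL$ game on $Subst(\mc D,\mc A_1)$ versus $Subst(\mc D,\mc A_2)$. Duplicator plays by composing two strategies: on the common $\mc D$-part $[m]-\{a\}$ she uses the copying (identity) strategy, and on the inserted copy of $\mc A_i$ she uses a fixed winning strategy for the rank-$q$ game on $\mc A_1$ against $\mc A_2$. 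Element moves landing in the $\mc D$-part are answered identically, and element moves landing in the inserted structure are answered inside the other inserted copy by the $\mc A$-strategy. A set move (a quantified subset $W$ of $Subst(\mc D,\mc A_i)$) is split into its trace $W\cap([m]-\{a\})$ on the $\mc D$-part and its trace on the $\mc A_i$-part, and each component strategy is played on its own trace; the bound on quantifier rank keeps the number of rounds finite and so allows the two winning strategies to be run in parallel.

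The step I expect to be the main obstacle is verifying that this composed strategy preserves the atomic relations of $\bar{R}$ across the substitution seam, in particular the blown-up tuples of the set $I$ that replace each occurrence of the former point $a$. Here the restriction to arity at most two is exactly what makes the bookkeeping go through: a binary tuple straddling the seam is incident to precisely one element on the $\mc A$-side and one on the $\mc D$-side, and by construction of $I$ its membership in $R_i$ depends only on the $\mc D$-coordinate, not on which $\mc A$-element occupies the seam position. Consequently such crossing tuples are \emph{uniform}, and since the $\mc D$-part is copied identically they are preserved no matter how the $\mc A$-strategy matches the inserted elements; they impose no extra constraint on Duplicator. One must also check the relations \emph{internal} to the inserted copy, which $I$ may modify when $\bar{R}$-tuples of $\mc D$ have $a$ in more than one place (e.g. a loop $(a,a)\in R_i^{\mc D}$ forces the corresponding relation on $\mc A$ to become complete): this modification is a quantifier-free transformation determined solely by the common data of $\mc D$ at $a$, so it is applied identically to both $\mc A_1$ and $\mc A_2$ and preserves equality of their rank-$q$ types, leaving the $\mc A$-strategy valid on the modified structures. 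Matching the distinguished element $a_2$ of the result is automatic, as it lies in the inserted copy and is tracked by the $\mc A$-game. (It is precisely the possibility of a crossing tuple having two $\mc A$-coordinates and one $\mc D$-coordinate, for arity at least three, that would break uniformity and the composition.)

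Granting the composition claim, the conclusion is immediate. If $\tau_q(\mc A_1)=\tau_q(\mc A_2)$ then for every pointed $\mc D$ the structures $Subst(\mc D,\mc A_1)$ and $Subst(\mc D,\mc A_2)$ share the same rank-$q$ type and hence agree on $\varphi$, so $Subst(\mc D,\mc A_1)\in\mc C$ iff $Subst(\mc D,\mc A_2)\in\mc C$, i.e. $\mc A_1\sim_{Su(\mc C)}\mc A_2$. Therefore $\sim_{Su(\mc C)}$ has at most as many classes as there are rank-$q$ $\MSOL$-types over $\bar{R}$, a finite number, and $\mc C$ has finite Specker index.
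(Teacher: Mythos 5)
The paper does not prove this lemma at all: it is quoted verbatim from Specker \cite{ar:Specker88} (see also \cite{ar:FischerMakowsky03}), so there is no in-paper proof to compare against. Your argument is correct and is essentially the standard one from that literature: a Feferman--Vaught-style composition theorem for $Subst$, proved via the $\MSOL$ Ehrenfeucht--Fra\"iss\'e game with set moves split along the seam, giving that $\tau_q(Subst(\mc D,\mc A))$ depends only on $(\tau_q(\mc D),\tau_q(\mc A))$ and hence that the Specker index is bounded by the (finite) number of rank-$q$ types. Your handling of the seam is right: crossing pairs are uniform in the $\mc A$-coordinate, and a loop $(a,a)\in R_i^{\mc D}$ just completes $R_i$ on the inserted copy identically in both structures, which a quantifier-rank-$q$ winning strategy survives. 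The one inaccuracy is your closing parenthetical: a crossing triple with two $\mc A$-coordinates arises only from a tuple of $\mc D$ with $a$ in two positions, and the blow-up $I$ then inserts \emph{all} such triples, so uniformity is not actually broken and the composition (hence finiteness of the Specker index) goes through for $\MSOL^k$-definable classes of arbitrary arity $k$. The genuine arity-$\leq 2$ restriction lives in the counting step, Lemma \ref{lem:FiniteIndex}, not here --- which is consistent with Fischer's quaternary counterexample, whose class is $\FOL$-definable and therefore still has finite Specker index. This aside does not affect the correctness of your proof of the stated lemma.
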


\subsection{Proof of Theorem \ref{th:mainSP}}

We prove the following lemma:

\begin{lem}
\label{lem:TVOIMSOLFiniteIndex}If $\mc C$ is a class of $\bar{R}$-structures
which is  $\TVOIMSOL^2$-definable, then $\mc C$ has finite Specker index. 
\end{lem}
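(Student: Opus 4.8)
The goal is to show that a $\TVOIMSOL^2$-definable class $\mc C$ has finite Specker index, and the natural strategy is to reduce this to the already-established Lemma~\ref{lem:MSOLFiniteIndex}, which gives the same conclusion for plain $\MSOL^2$-definable classes. The obstacle is that a $\TVOIMSOL^2$ formula $\phi(\bar R, <_1)$ mentions a linear order $<_1$ that is \emph{not} part of the structure, so $\mc C$ (as a class of $\bar R$-structures) need not be $\MSOL^2$-definable in the usual sense. By Example~\ref{ex:order}(i), however, $\mc C$ is defined by the $\SOL$ sentence $\psi(\bar R)=\exists{<_1}\,(\phi(\bar R,<_1)\wedge\phi_{linOrd}(<_1))$, and the whole point of truth-value order invariance is that this existential is harmless: the truth value of $\phi$ does not depend on which linear order we pick.

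**Key steps.** First I would introduce the auxiliary class $\mc C^{<}$ of \emph{ordered} $\bar R\cup\{<_1\}$-structures defined by $\phi(\bar R,<_1)\wedge\phi_{linOrd}(<_1)$. Since $<_1$ is now a genuine binary relation in the signature and $\phi$ is an $\MSOL^2$ formula over this enlarged (still at most binary) signature, Lemma~\ref{lem:MSOLFiniteIndex} applies directly to $\mc C^{<}$: it has finite Specker index with respect to the substitution operation $Subst$ over ordered structures. The heart of the argument is then to transfer this finiteness back to $\mc C$. I would prove that $\sim_{Su(\mc C)}$ on $\bar R$-structures is refined by the equivalence obtained from $\mc C^{<}$: concretely, if two $\bar R$-structures $\mc A_1,\mc A_2$ admit linear-order expansions $\mc A_1^{<},\mc A_2^{<}$ that are $\sim_{Su(\mc C^{<})}$-equivalent, then $\mc A_1\sim_{Su(\mc C)}\mc A_2$. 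The reason is truth-value order invariance: for any pointed $\bar R$-structure $\mc D$, membership $Subst(\mc D,\mc A_i)\in\mc C$ can be tested by choosing \emph{any} linear order on the result, in particular one extending a fixed order on $\mc D$ and the chosen order on $\mc A_i^{<}$, and by t.v.o.i. the answer is independent of that choice. Thus the map sending an $\sim_{Su(\mc C^{<})}$-class to the family of $\sim_{Su(\mc C)}$-classes it can realize is onto, so $\mc C$ has at most as many classes as $\mc C^{<}$, which is finite.

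**The main obstacle.** The delicate point is that $Subst$ on ordered structures and $Subst$ on unordered structures interact correctly with t.v.o.i. One must check that substitution into an ordered structure can be arranged so that the reduct to $\bar R$ is exactly the unordered substitution, i.e.\ that $Subst(\mc D^{<},\mc A_i^{<})\!\restriction_{\bar R}=Subst(\mc D,\mc A_i)$ for suitably chosen orders; here one exploits the freedom in defining the order on the glued universe, which $\phi_{linOrd}$ permits and t.v.o.i.\ renders irrelevant. Once this bookkeeping is in place, the finiteness of the index of $\mc C^{<}$ forces finiteness of the index of $\mc C$, completing the proof. Combining this lemma with Lemma~\ref{lem:FiniteIndex} (valid since all relations in $\bar R$, and $<_1$, are at most binary) yields that $f_{\mc C}(n)=f$ satisfies modular linear recurrence relations for every $m$, which is exactly Theorem~\ref{th:mainSP}.
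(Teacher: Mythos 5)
Your proposal is correct and follows essentially the same route as the paper: both pass to the auxiliary class of ordered $\bar{R}\cup\{<_1\}$-structures (the paper's $\mc C'$, your $\mc C^{<}$), apply Lemma~\ref{lem:MSOLFiniteIndex} to it, and transfer finiteness of the Specker index back to $\mc C$ via the observation that $Subst$ on ordered structures projects to $Subst$ on the $\bar{R}$-reducts while t.v.o.i.\ makes the choice of order irrelevant. The only cosmetic difference is that the paper argues contrapositively (an infinite antichain for $\sim_{Su(\mc C)}$ lifts to one for $\sim_{Su(\mc C')}$) whereas you bound the number of classes directly.
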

\begin{proof}
Let $\mc C$ be a set of $\bar{R}$-structures defined by the 
$\TVOIMSOL(\bar{R})$ formula $\phi$. Let $\mc C'$ be the class
of all $\bar{R}\cup R_{<}$-structures $\left\langle \mc A,R_{<}\right\rangle $
such that $\mc A\in\mc C$ and $R_{<}$ is a linear ordering of
the universe of $\mc A$. Let $\phi'$ be the $\MSOL(\bar{R}\cup \{R_{<}\})$ formula obtained from 
$\phi$ by the following changes:
\begin{enumerate}
\item the order used in $\phi$, $a <_1 b$, is replaced with the new relation symbol $R_{<}$ 
\item it is required that $R_{<}$ is a linear ordering of $[n]$. 
\end{enumerate}
We note that $\phi'$ defines $\mc C'$, since $\phi$ is truth-value order invariant
and that $\phi'$ is an $\MSOL^2$-formula.

We will now prove that $\mc{C}$ has finite Specker index, by showing that if it does not, then $\mc{C'}$ also has infinite Specker index, in contradiction to Lemma \ref{lem:MSOLFiniteIndex}.
Assume $\mc{C}$ has infinite Specker index. Then there is an infinite set $W$
of $\bar{R}-$structures, such that for every distinct $\mc A_{1},\mc A_{2}\in W$,
$\mc A_{1}\not\sim_{Su(C)}\mc A_{2}$. So, for every $\mc A_{1},\mc A_{2}\in W$
there is $\left\langle \mc G,s\right\rangle $ such that 
\[
Subst(\left\langle \mc G,s\right\rangle ,\mc A_{1})\in\mc{C}  \mbox{ iff } Subst(\left\langle \mc G,s\right\rangle ,\mc A_{2})\notin\mc C.
\]
Now look  at 
$W'=\left\{ \left\langle \mc A,R_< \right\rangle \mid\mc A\in W, R_< \mbox{ linear order of } [n] \right\}$,
where $[n]$ is the universe of $\mc{A}$. We note 
$Subst(\left\langle \mc G,R_{<_{\mc G}},s\right\rangle ,\left\langle \mc A_{1},R_{<_{{\mc A}_1}}\right\rangle )=
\left\langle Subst(\mc G,\mc A_{1}),R_{<^{'}}\right\rangle $,
where $R_{<^{'}}$ a linear ordering of the universe of $Subst(\mc G,\mc A_{1})$ which extends $R_{\mc{A}_1}$ and $R_{\mc{G}}$, 
and similarly for $\mc A_{2}$. 
Therefore, 
\[
Subst(\left\langle \mc G,R_{<_{\mc G}},s\right\rangle ,\left\langle \mc A_{1},R_{<_{{\mc A}_1}}\right\rangle )\in\mc C' 
\mbox{ iff }
Subst(\left\langle \mc G,R_{<_{\mc G}},s\right\rangle ,\left\langle \mc A_{2},R_{<_{{\mc A}_2}}\right\rangle
)\notin\mc C'.
\] 
So the Specker index of $\mc{C}'$ is infinite, in contradiction. 
\hfill $\Box$
\end{proof}

Theorem \ref{th:mainSP} now follows from lemma \ref{lem:FiniteIndex}.  

\subsection{Counting Order Invariant $\MSOL^2$}

Here we show the Specker-Blatter Theorem does not hold 
for c.o.i. $\MSOL^2$-definable Specker functions. 
We have two such examples, the function
$E_{2,=}$, as defined in Proposition \ref{pr:E2eq}, 
and the Catalan numbers, which we discuss in Section \ref{se:ex}.

More precisely, here we show:
\begin{prop}
$E_{2,=}$, as defined in Proposition \ref{pr:E2eq} is a c.o.i. $\MSOL^2$-Specker function.
\end{prop}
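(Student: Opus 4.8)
The plan is to exhibit an explicit class $\mc D$ of ordered $\bar R$-structures, definable in $\MSOL^2$, whose ordered speed $osp_{\mc D}(n,<_1)$ does not depend on the chosen linear order and equals $E_{2,=}(n)$. The natural signature is a single binary relation symbol $E$ together with the order relation $<_1$. I would let $\mc D$ be the class of structures $\langle [n], <_1, E\rangle$ in which $E$ is an equivalence relation having exactly two equivalence classes of equal size. The first task is to write a defining $\MSOL$ formula $\phi(E,<_1)$ for this class, and the second task is to verify counting order invariance, i.e. that the number of such $E$ is independent of $<_1$.

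For the definability step, I would conjoin the standard $\FOL$ sentences asserting that $E$ is reflexive, symmetric and transitive with a formula asserting there are exactly two classes and that they have equal cardinality. Having exactly two classes is easy to say (there exist $x,y$ with $\neg E(x,y)$ and every $z$ is $E$-related to $x$ or to $y$). The equal-size requirement is where the order $<_1$ enters: using $<_1$ I can express that the two classes are equinumerous by asserting the existence of a monadic relation encoding a bijection, or more simply by exploiting that $\MSOL$ over a linear order can count class sizes modulo structure; the cleanest route is to quantify a set $U$ interpreted as a matching that pairs each element of one class with an element of the other, and to state (in $\MSOL$, using $<_1$ to define successor and to march through the universe) that $U$ is the graph of a bijection between the two $E$-classes. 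This is the same kind of $\MSOL$-over-an-order construction already used in the proof that $C_{a,b}\MSOL \subseteq \TVOIMSOL$ in the excerpt, so I would reuse that technique. All relation symbols involved ($E$ binary, $U$ binary, $<_1$ binary) have arity $2$, so $\phi \in \MSOL^2$.

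For the counting order invariance step, I would observe that the set of binary relations $E$ satisfying $\phi(E,<_1)$ is exactly the set of equivalence relations on $[n]$ with two equal-sized classes, and this set is determined purely by $E$ and $[n]$: whether a given equivalence relation $E$ has two equal-sized classes is an order-independent property. The role of $<_1$ is only to let $\MSOL$ \emph{certify} equal size via an existentially quantified matching $U$; for each admissible $E$ such a matching exists regardless of which order $<_1$ we fixed, and for each inadmissible $E$ no matching exists under any order. Hence $\{E : \langle [n],<_1,E\rangle \in \mc D\}$ is the same set for every linear order $<_1$, so $osp_{\mc D}(n,<_1)=osp_{\mc D}(n,<_2)$ for all orders, which is exactly the c.o.i.\ condition. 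Counting this set gives $\frac12\binom{2n}{n}$ for even universe size and $0$ for odd size, matching $E_{2,=}$ as stated in Proposition~\ref{pr:E2eq}.

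The main obstacle is the equal-size clause: plain $\MSOL$ cannot count or compare cardinalities, so everything hinges on correctly encoding a bijection between the two classes as a monadic-quantifiable object using only the order $<_1$. I expect the delicate point to be writing, in $\MSOL$, that a quantified relation $U$ is a perfect matching between the two $E$-classes (functionality, totality on each class, and injectivity), and confirming that existence of such a $U$ is equivalent to the two classes being equinumerous; once that is in place the order invariance of the \emph{count} follows immediately from the order-independence of the underlying combinatorial condition. I would keep the matching encoded as a single binary relation to stay within $\MSOL^2$.
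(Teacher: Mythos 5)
There is a genuine gap here, and in fact your route cannot be repaired. The crux is the equal-size clause. First, $\MSOL$ only permits second-order quantification over \emph{monadic} relation variables; the superscript in $\MSOL^2$ bounds the arity of the vocabulary symbols $\bar{R}$, not of the quantified variables. So existentially quantifying a binary matching $U$ inside the formula, as you propose (``I would keep the matching encoded as a single binary relation''), puts you in $\SOL$, not $\MSOL$. Second, you cannot replace the matching by monadic machinery: encoding a partition of $[n]$ into two sets as a word over $\{0,1\}$, the condition ``the two parts are equinumerous'' corresponds to the language of words with equally many $0$'s and $1$'s, which is not regular, hence by Theorem \ref{th:buchi} not $\MSOL$-expressible over $\langle [n],<_1\rangle$. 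Third, and decisively: if your construction worked, the resulting formula would be truth-value order invariant, since the property ``$E$ is an equivalence relation with exactly two equal-sized classes'' does not depend on $<_1$; Theorem \ref{th:mainSP} would then force $E_{2,=}$ to be ultimately periodic modulo $2$, contradicting Proposition \ref{pr:E2eq}. So no t.v.o.i.\ definition of this class exists, and any correct proof must use the order to change the \emph{set} of counted tuples while preserving only the \emph{count}.

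That is exactly what the paper does, and it is the idea you are missing: instead of counting equivalence relations $E$ and certifying equal size by a quantified matching, the paper counts triples $(U,R,F)$ where $U,R$ are unary symbols for the two classes, $F$ is a binary symbol \emph{in the counted vocabulary} interpreted as a bijection from $U$ onto $R$, and $F$ is required to be monotone with respect to $<_1$ (with the extra clause $1\in U$ to kill the $U\leftrightarrow R$ symmetry). Monotonicity pins $F$ down uniquely for each admissible partition and each order, so the number of triples equals the number of partitions of $[n]$ into two equal parts for \emph{every} choice of $<_1$; the function is c.o.i.\ even though the formula itself is not order invariant. Putting the bijection into the counted vocabulary and forcing its uniqueness via the order is the essential device; without it the equal-cardinality condition is simply not available in $\MSOL$.
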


\begin{proof}
Let $\mc{C}$ be defined as follows:
\[
 \mc{C}=\left\{ \left\langle U,R,F \right\rangle \mid 
\left\langle [n],<_1,U,R,F \right\rangle\models \Phi \right\},
\]
where $U$ and $R$ are unary and $F$ is binary, $<_1$ is a linear order of $[n]$, and $\Phi$ is says 
\begin{enumerate}
\item $F$ is a function,
\item $U$ is the domain of $F$,
\item $R$ is the range of $F$, 
\item $U$ and $R$ form a partition of $[n]$,
\item the first element of $[n]$, is in $U$, 
\item $F:U\to R$ is a bijection, and
\item $F$ is monotone with respect to $<_1$. 
\end{enumerate}
We note $\mc{C}$ is $\MSOL^2$ definable. 
We note also that $osp_{\mc{C}}(n,<_1)$ is counting order invariant. 
$osp_{\mc{C}}(n,<_1)$ counts the number of partitions of $[n]$ into two equal
parts, because there is exactly one monotone bijection between any
two subsets of $[n]$ of equal size. The condition that $1\in U$
assures that we do not count the same partition twice. 
So $osp_\mc{C}(n,<_1) = E_{2,=}(n)$. 
\hfill $\Box$
\end{proof}
We know that  $E_{2,=}$ is not ultimately periodic modulo $2$ and hence the Specker-Blatter theorem cannot be extended to c.o.i. $\MSOL^2$-Specker functions.

\section{Examples}
\label{se:ex}
\subsection{Examples of $\MSOL^k$-Specker functions}
\ifmarginB
\marginpar{Made into two subsections}
\fi
\subsubsection{Fibonacci and Lucas Numbers}

The Fibonacci numbers $F_{n}$ satisfy the linear recurrence
$F_{n}=F_{n-1}+F_{n-2}$
for $n>1$, $F_{0}=0$ and $F_{1}=1$.
The Lucas numbers $L_{n}$, a variation of the Fibonacci numbers,
satisfy the same recurrence for $n>1$, $L_{n}=L_{n-1}+L_{n-2}$,
but have different initial conditions, $L_{1}=1$ and $L_{0}=2$.

\ifmargin
\marginpar{Fibonacci, Lucas and Chebyshev polynomials removed}
\fi

It follows from the proof of Theorem \ref{th:main1} that a function which satisfies a linear recurrence relation over $\N$ is a c.o.i $\MSOL^1$-Specker function. Thus. 
The Fibonacci and Lucas numbers are natural examples of c.o.i-$\MSOL^1$-Specker functions.

\subsubsection{Stirling Numbers}
The Stirling numbers of the first kind, 
denoted $\left[\substack{n\\ r} \right]$ 
are defined as the number of ways to arrange $n$ objects into $r$
cycles. For fixed $r$, this is an $\MSOL^2$-Specker function, since for
$E\subseteq[n]^{2}$ and $U\subseteq E$, the property that $U$ is
a cycle in $E$ and the property that $E$ is a disjoint union of
cycles are both $\MSOL^2$-definable.
Using again the growth argument from Example \ref{ex:1}(\ref{grow}),
we can see that the Stirling numbers of the first kind do not satisfy
a linear recurrence relation, because
$\left[\substack{n\\ 1} \right]$ 
grows like the factorial $(n-1)!$.
However, from the Specker-Blatter Theorem it follows that 
they satisfy a modular linear
recurrence relation for every $m$.

The Stirling numbers of the second kind, 
denoted $\left\{\substack{n\\ r} \right\}$,
count the number of partitions of a set $[n]$ into $r$ many non-empty subsets.
For fixed $r$, this is $\MSOL^2$-definable: We count the number of equivalence
relations with $r$ non-empty equivalence classes.
From the Specker-Blatter Theorem  it follows that
they satisfy a modular linear
recurrence relation for every $m$.
We did not find in the literature a linear recurrence relation
for the Stirling numbers of the second kind which fits our context.
But we show below that such a recurrence relation exists.

\begin{prop}
\label{prop:Stirling2}
For fixed $r$, the Stirling numbers of the second kind
are c.o.i. $\MSOL^1$-Specker functions.
\end{prop}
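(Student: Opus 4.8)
The key fact I would exploit is that the Stirling numbers of the second kind $\left\{\substack{n\\ r}\right\}$ satisfy, for fixed $r$, the linear recurrence $\left\{\substack{n\\ r}\right\} = r\cdot\left\{\substack{n-1\\ r}\right\} + \left\{\substack{n-1\\ r-1}\right\}$ with constant coefficients. Since this recurrence mixes the parameter $r$ with $r-1$, the clean way to package it is to treat the whole family $\left(\left\{\substack{n\\ 0}\right\},\ldots,\left\{\substack{n\\ r}\right\}\right)$ simultaneously: this vector satisfies a first-order matrix recurrence with a constant upper-triangular integer matrix, and hence each individual coordinate $\left\{\substack{n\\ r}\right\}$ satisfies a linear recurrence relation over $\mathbb{Z}$ with constant coefficients (the characteristic polynomial of that matrix, which has integer, indeed nonnegative, roots $0,1,\ldots,r$). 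The plan is to first establish that for each fixed $r$ the function $n\mapsto\left\{\substack{n\\ r}\right\}$ satisfies such a recurrence over $\mathbb{N}$.

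Once the recurrence over $\mathbb{N}$ is in hand, I would invoke the machinery already developed in the excerpt. The proof of Theorem~\ref{th:main1}, as remarked at the start of the Fibonacci and Lucas discussion, in fact shows the stronger statement that \emph{any} function satisfying a linear recurrence relation with nonnegative integer coefficients and nonnegative initial conditions is directly a c.o.i. $\MSOL^1$-Specker function (not merely a difference of two such). The reason is exactly the construction in Theorem~\ref{th:main2} and its follow-up in subsection~\ref{subse:Proof}: one sums over paths of the recurrence tree via the $\MSOL$-definable formula $\phi_{rec}$, and when all the coefficients $f_i(\bar{x})$ and initial values $A_i(\bar{x})$ are nonnegative integers, the substitution of Lemma~\ref{lem:EvalPol} produces only products of the form $\prod_{v:\theta} a$ with $a\geq 0$, each of which was shown to be a plain c.o.i. $\MSOL^1$-Specker function (the "$a\geq 0$" case, where no cancellation and hence no subtraction is needed). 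So I would verify that the recurrence obtained for $\left\{\substack{n\\ r}\right\}$ has nonnegative coefficients and nonnegative initial conditions.

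Concretely, the steps in order are: (i) recall the triangular recurrence $\left\{\substack{n\\ r}\right\} = r\,\left\{\substack{n-1\\ r}\right\} + \left\{\substack{n-1\\ r-1}\right\}$; (ii) observe it assembles into a constant-coefficient vector recurrence over $\mathbb{N}$, so each coordinate satisfies a linear recurrence over $\mathbb{Z}$ with \emph{nonnegative} integer coefficients and nonnegative integer initial data; (iii) apply the nonnegative case of the construction underlying Theorem~\ref{th:main1} to conclude directly that $n\mapsto\left\{\substack{n\\ r}\right\}$ is a c.o.i. $\MSOL^1$-Specker function, with no subtraction required. The main obstacle is the bookkeeping in step~(ii): one must confirm that eliminating the coupling between consecutive values of $r$ (to get a pure recurrence in $n$ for a single $r$) does not introduce negative coefficients. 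Since the governing matrix is nonnegative and upper triangular with eigenvalues $0,1,\ldots,r$, its characteristic polynomial is $\prod_{j=0}^{r}(x-j)$, whose expansion has coefficients of alternating sign; however, the recurrence form $\left\{\substack{n\\ r}\right\}=\sum_j a_j \left\{\substack{n-j\\ r}\right\}$ need not have nonnegative $a_j$. This is precisely why the theorem is stated as "c.o.i. $\MSOL^1$-Specker function" and the honest route is to avoid solving for a single coordinate and instead keep the full nonnegative triangular system, reading off the construction path-by-path through the combined $\MSOL^1$ encoding of all $r+1$ coordinates at once, so that every coefficient fed into $\phi_{rec}$ stays nonnegative.
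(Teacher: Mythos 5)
Your route is genuinely different from the paper's, and as written it has a gap at its decisive step. You correctly note that the scalar recurrence for a single coordinate, read off from the characteristic polynomial $\prod_{j=0}^{r}(x-j)$ of the triangular system, has coefficients of mixed sign (already for $r=2$ one gets $f(n)=3f(n-1)-2f(n-2)$), so the nonnegative case of the construction behind Theorem~\ref{th:main1} --- which the paper does endorse: a linear recurrence over $\N$ yields a single c.o.i.\ $\MSOL^1$-Specker function with no subtraction --- does not apply to it. Your proposed repair, namely to keep the coupled system $\left\{\substack{n\\ k}\right\}=k\left\{\substack{n-1\\ k}\right\}+\left\{\substack{n-1\\ k-1}\right\}$ and ``read off the construction path-by-path,'' is the right idea but cannot simply be invoked: Theorem~\ref{th:main2} and the formula $\phi_{rec}$ are stated for a single scalar recurrence $A_n=\sum_i f_i\cdot A_{n-i}$, not for a vector recurrence $v_n=Mv_{n-1}$. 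To close the gap you would have to extend the path encoding so that each position of $[n]$ also records the current coordinate $k\in\{0,\dots,r\}$ (feasible with $O(r)$ extra unary predicates, since every step decrements $n$ by one and decrements $k$ by zero or one, contributing weight $k$ or $1$), and then turn the weight $\prod k$ into a count via the partition trick from Subsection~\ref{subse:Proof}. That extension is routine but it is the entire content of the proof, and you only assert it.

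For contrast, the paper's proof is a one-line direct encoding that needs no recurrence at all: take $r$ unary relations $U_1,\dots,U_r$ partitioning $[n]$ into nonempty sets, and use the linear order only to canonicalize, requiring that $\min U_i < \min U_j$ for $i<j$ so that each set partition is counted exactly once; this is visibly c.o.i.\ and uses only unary relations. It is worth observing that if you did push your system-of-recurrences encoding all the way through, the positions where the coordinate drops would become exactly the minimal elements of the blocks and the weight-$k$ steps would become the choice of which existing block each element joins --- that is, you would rederive the paper's direct encoding the long way around.
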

\begin{proof}
We use $r$ unary relations $U_1, \ldots , U_r$ and say that they
partition the set $[n]$ into non-empty sets.
However, when we permute the indices of the $U_i$'s
we count two such partitions twice. To avoid this
we use a linear ordering on $[n]$ and require that, with respect to this ordering,
the minimal element in $U_i$ is smaller than all the minimal elements
in $U_j$ for $j > i$.
\hfill $\Box$
\end{proof}

\begin{cor}
\label{cor:Stirling2}
For every $r$ there exists a linear recurrence relation with constant coefficients
for the Stirling numbers of the second kind
$\left\{\substack{n\\ r} \right\}$.
Further more there are constants $c_r$ such that
$\left\{\substack{n\\ r} \right\} \leq 2^{c_r \cdot n}$.
\end{cor}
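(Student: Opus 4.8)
The plan is to obtain both assertions directly from the machinery already in place, since Proposition \ref{prop:Stirling2} has supplied the essential combinatorial content. For the first assertion, I would observe that a counting order invariant $\MSOL^1$-Specker function is, by definition, given by a class $\mc D$ of ordered structures definable in $\MSOL$ with unary relation symbols only, whose value $osp_{\mc D}(n,<_1)$ is independent of the choice of linear order. In particular, instantiating $<_1$ by the natural order $<_{nat}$ on $[n]$ exhibits $\left\{\substack{n\\ r}\right\}$ as an $\MSOL^1$-definable speed function given $<_{nat}$. This is precisely the hypothesis of Proposition \ref{prop:CS}, which then yields a linear recurrence relation with constant coefficients over $\Z$, establishing the first claim.

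For the growth bound I would argue in either of two ways. The quickest route is to note that, having just shown $\left\{\substack{n\\ r}\right\}$ satisfies a linear recurrence with constant coefficients, the growth observation already recorded in Example \ref{ex:1}(\ref{grow}) tells us that every such function grows no faster than $2^{O(n)}$; choosing $c_r$ to absorb the implied constant gives the stated inequality. Alternatively, and more concretely, one can use the elementary estimate $\left\{\substack{n\\ r}\right\}\le r^{n}$, valid because any partition of $[n]$ into $r$ labelled (possibly empty) blocks is determined by a function $[n]\to[r]$, of which there are $r^n$; then $c_r=\log_2 r$ works.

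I do not expect a genuine obstacle here, as the two cited results carry essentially all the weight. The only point warranting care is the passage in the first paragraph from ``counting order invariant'' to the order-given $\MSOL^1$ setting of Proposition \ref{prop:CS}: one must invoke counting order invariance to guarantee that the value computed with $<_{nat}$ coincides with the intended Specker function, so that B\"uchi's Theorem (Theorem \ref{th:buchi}) and \cite{ar:ChomskySchuetzenberger}, on which Proposition \ref{prop:CS} rests, apply to the corresponding regular language.
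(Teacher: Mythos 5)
Your proposal is correct and follows essentially the same route the paper intends: Proposition \ref{prop:Stirling2} plus Proposition \ref{prop:CS} (the same combination used in the easy direction of Theorem \ref{th:main1}) yields the linear recurrence, and the growth bound follows from the observation in Example \ref{ex:1}(\ref{grow}) that constant-coefficient linear recurrences grow at most as $2^{O(n)}$. Your alternative elementary bound $\left\{\substack{n\\ r}\right\}\le r^{n}$ is a nice concrete supplement but not a different method.
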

Our proof is not constructive, and we did not bother here to calculate the
explicit linear recurrence relations or the constants $c_r$ for each $r$.

\subsubsection{Catalan Numbers}
\ifmargin
\marginpar{Changed combinatorial interpretation somewhat. Expanded on how to express $C_n$}
\fi

\label{subse:catalan}
Catalan numbers were defined in Section \ref{se:intro} Example \ref{ex:1}.
We already noted that they do not satisfy any modular linear recurrence relation.
However, like the example $E_{2,=}$, the functions $f_c(n)=C_n$ is a c.o.i. $\MSOL^2$-Specker function.
To see this we use the following interpretation of Catalan numbers given in
\cite{bk:GKP94}.

$C_n$ counts the number of tuples $\bar{a}=(a_0,\ldots,a_{2n-1})\in[n]^{2n}$ such that
\begin{renumerate}
 \item
 $a_0=1$
 \item
 $a_{i-1}-a_i\in \{1,-1\}$ for $i=1,\ldots,2n-2$
 \item
 $a_{2n-1}=0$
\end{renumerate}
We can express this in $\MSOL^2$ using a linear order and
two unary functions. 
The two functions 
$F_1$ and $F_2$ are used to describe 
$a_0,\ldots,a_{n-1}$ and $a_n,\ldots,a_{2n-1}$ respectively. 
Let $\Phi_{Catalan}$ be the formula that says:
\begin{renumerate}
 \item $F_1,F_2:[n]\to[n]$
 \item $F_i(x+1)=F_i(x)\pm 1$ for $i=1,2$ and there exists $y=x+1\in[n]$
 \item  $F_1(n-1)=F_2(0)\pm 1$.
 \item $F_1(0)=1$
 \item  $F_2(n-1)=0$
 \end{renumerate}
The resulting formula is not t.vo.i., but $C_n = sp_{\mc{C}}(n)$ where 
$$
\mc{C}=\left| \left\{  (F_1,F_2) \mid \left\langle[n],<_1,F_1,F_2 \right\rangle \models \Phi_{Catalan} \right\} \right|
$$
is a c.o.i $\MSOL^2$-Specker function.

\begin{cor}
\label{cor:Catalan}
The function $f(n)=C_n$ is a c.o.i $\MSOL^2$-Specker function
and does not satisfy a modular linear recurrence relation modulo $2$.
\end{cor}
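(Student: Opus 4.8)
The plan is to treat the two assertions of the corollary separately. The first --- that $f(n)=C_n$ is a c.o.i.\ $\MSOL^2$-Specker function --- has in fact already been established in the discussion preceding the corollary: the class $\mc{C}$ defined by $\Phi_{Catalan}$ over the two unary functions $F_1,F_2$ together with a linear order gives $osp_{\mc{C}}(n,<_1)=C_n$, and this value is independent of the chosen order $<_1$ because the admissible pairs $(F_1,F_2)$ are in bijection with the lattice-path tuples $\bar{a}$ counted by $C_n$, a count that refers to the order only to name successive positions. So here I would merely record that $C_n$ is counting order invariant and $\MSOL^2$-definable, pointing back to $\Phi_{Catalan}$.

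For the second assertion I would use the equivalence, noted earlier in the paper, between ultimate periodicity over $\Z_m$ and the existence of a linear recurrence with constant coefficients over $\Z_m$. Thus it suffices to prove that the sequence $C_n \bmod 2$ is not ultimately periodic. Here I would invoke the fact from Example \ref{ex:1}(iv) (attributed to Knuth, \cite{bk:Knuth06}) that $C_n$ is odd precisely when $n=2^k-1$ for some $k$. Hence the set of indices at which $C_n$ is odd is $\{2^k-1 : k\in\N\}$, and the gaps between consecutive such indices, namely $(2^{k+1}-1)-(2^k-1)=2^k$, grow without bound.

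The non-periodicity then follows by a short contradiction argument, which is the only real step. Suppose $C_n\bmod 2$ were ultimately periodic with period $p\ge 1$ for all $n\ge n_0$. I would choose $k$ large enough that $2^k-1\ge n_0$ and $2^k>p$, and set $n=2^k-1$, so that $C_n$ is odd. Periodicity forces $C_{n+p}\equiv C_n\equiv 1\pmod 2$, so $n+p$ must again be of the form $2^j-1$. But the inequalities $2^k-1 < n+p < (2^k-1)+2^k = 2^{k+1}-1$ place $n+p$ strictly between two consecutive indices of odd Catalan numbers, whence $C_{n+p}$ is even --- a contradiction. Consequently $C_n$ is not ultimately periodic modulo $2$, and therefore admits no modular linear recurrence relation modulo $2$.

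I expect no serious obstacle, since the parity characterization is cited and the growing-gap argument is elementary. The only point requiring a little care is pinning down that the count arising from $\Phi_{Catalan}$ is genuinely order invariant, i.e.\ that every admissible pair $(F_1,F_2)$ encodes exactly one valid tuple $\bar{a}$ regardless of $<_1$; this is immediate from the interpretation of $C_n$ recalled just above.
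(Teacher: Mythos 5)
Your proposal is correct and follows essentially the same route as the paper: the first claim is exactly the preceding construction via $\Phi_{Catalan}$, and the second combines the equivalence of modular linear recurrences with ultimate periodicity and the Knuth parity characterization $C_n$ odd iff $n=2^k-1$, which is precisely how Example~\ref{ex:1}(iv) disposes of periodicity modulo $2$. The only difference is that you spell out the growing-gaps contradiction that the paper leaves implicit, and that argument is sound.
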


\subsubsection{Bell Numbers}
\ifmargin
\marginpar{Changed from Touchard polynomials to Bell numbers+Simplified due to comments of the referees}
\fi
The Bell numbers $B_n$ count the number of equivalence relations on $n$ elements. We note $f(n)=B_n$ is a $\MSOL^2$-Specker function. However, $B_n$ is not c.o.i $\MSOL^1$-definable due to a growth argument.

\ifmargin
\marginpar{Mittag-Leffler Polynomials removed}
\fi

\ifmarginB
\marginpar{Added the material on Specker polynomial here}
\fi
\subsection{Examples of $\MSOL^k$-Specker Polynomials}
Our main interest are $\mc{L}^k$-Specker functions, and the $\mc{L}^k$-Specker polynomials were introduced as an auxiliary tool. However, there are natural examples in the literature of $\mc{L}^k$-Specker polynomials. 
\subsubsection{Fibonacci, Lucas and Chebyshev Polynomials}
The recurrence $F_{n}(x)=x\cdot F_{n-1}(x)+F_{n-2}(x)$, $F_{1}(x)=1$
and $F_{2}(x)=x$ defines the Fibonacci polynomials. The Fibonacci
numbers $F_{n}$ can be obtained as an evaluation of the Fibonacci polynomial for $x=1$,
$F_{n}(1)=F_{n}$. The Lucas polynomials are defined analogously.

The Chebyshev polynomials of the first kind (see \cite{bk:MasonHandscomb05}) are
defined similarly by the recurrence relation $T_{n+1}(x)=2xT_{n}(x)-T_{n-1}(x)$,
$T_{0}(x)=1,$ and $T_{1}(x)=x$. 
The Fibonacci, Lucas and Chebyshev polynomials are natural examples of Specker polynomials.
As they are defined by linear recurrence relations, they are c.o.i $\MSOL^1$-definable.

\subsubsection{Touchard Polynomials}
The Touchard polynomials are defined 
$$
T_{n}(x)=\sum_{k=1}^{n}\left\{ \substack{n\\
k}
\right\} x^{k}
$$ 
where $\left\{ \substack{n\\ k} \right\} $
is the Stirling number of the second kind. 
$T_n(x)$ is c.o.i $\MSOL^2$-definable;
To see this we note that it is defined by
\[
T_n(x)=\sum_{E:\Phi_{cliques}(E)}\,\,\prod_{u:\Phi_{first-in-cc}(E,u)}x
\]
where $\Phi_{cliques}(E)$ says $E$ is a disjoint union of cliques
and where 
$$\Phi_{first-in-cc}(E,u)=\forall v\left(\left(v <_1 u\,
\land\, v\not=u\right)\to(v,u)\notin E\right),$$
i.e. it says $u$ is the first vertex in its connected component,
with respect to the order (less or equal) of $[n]$. 
Clearly, 
$\Phi_{cliques}(E)$  and
$\Phi_{first-in-cc}(E,u)$
are in $\MSOL^2$.
We note that
$\Phi_{first-in-cc}(E,u)$ is not
invariant under the order $<_1$. 
The Bell numbers $B_n$ are given as an evaluation of $T_n(x)$, $B_n=T_n(1)$, which implies 
$T_n(x)$ is not co.i $\MSOL^1$-definable due to a growth argument.

\subsubsection{Mittag-Leffler Polynomials}
The Mittag-Leffler polynomial (see \cite{ar:Bateman40}) is given
by 
$$
M_{n}(x)=\sum_{k=0}^{n}
\left(\substack{n\\ k} \right)
(n-1)^{\underline{n-k}}2^{k}x^{\underline{k}}
$$ 
It holds that
\[
M_{n}(x)=\sum_{U\subseteq[n]}\left(n-1\right)
\cdots k\cdot2^{k}\cdot x\cdots(x-(k-1))=\sum_{U,F,T,S:\Phi_{M}}1.
\]
where $\Phi_{M}(U,F,T,S)$ says $U\subseteq[n],$ $F$ is an injective
function from $[n]-\left\{ n\right\} $ to $[n]$, $T$ is an injective
function from $U$ to $[x]$, and $S$ is a function from $U$ to
$\left\{ 1,n\right\} $. 
So, every evaluation of 
$M_{n}(x)$ where $x=m$, $m \in \N$, is
a c.o.i $\MSOL^2$-Specker function.

Note that
\begin{equation}
\label{eq:ML}
M_{n+1}(x) = \frac{1}{2}x \left[ M_n(x+1) + 2 M_n(x) + M_n(x-1) \right]
\end{equation}
This looks almost like a linear recurrence relation combined with
an interpolation formula, and is not of the kind
we are discussing here.

\section{Conclusions and Open Problems}
\ifmargin
\marginpar{Basically unchanged except for removal of comments about Specker polynomials}
\fi

\label{se:conclu}

We have introduced the notion of one variable $\mc{L}^k$-Specker functions $f:\N\to\N$ 
as the speed of a $\mc{L}^k$-definable class of relational structures $\mc{K}$, i.e. $f(n)=sp_{\mc{K}}(n)$. 
We have investigated for which fragments $\mc{L}$ of $\SOL$ the $\mc{L}^k$-Specker functions satisfy linear recurrence relations over $\Z$ or $\Z_m$. 

We have used order invariance, definability criteria and limitation on the vocabulary
to continue the line of study, initiated by C. Blatter and E. Specker
\cite{pr:BlatterSpecker81},\cite{pr:BlatterSpecker84},\cite{ar:Specker88},
what type of linear recurrence relations one can expect from Specker functions.
We have completely characterized (Theorem \ref{th:main1}) the combinatorial functions which satisfy
linear recurrence relations with constant coefficients,
and we have discussed (Table \ref{table1} in Section \ref{se:intro}) 
how far one can extend the Specker-Blatter Theorem
in terms of order invariance and $\MSOL$-definability.
As a consequence, we obtained (Corollary \ref{cor:main1}) 
a new characterization of the $\Z$-rational functions 
$f:\N\to\N$ as the difference of $\N$-rational functions. 

We have not studied many variables $\mc{L}^k$-Specker functions arising from many-sorted structures, 
although this is a natural generalizations: For a class of graphs $\mc{K}$, $sp_{\mc{K}}(n,m)$ couns the number of 
graphs with $n$ vertices and $m$ edges which are in $\mc{K}$. 
Even for functions in one variable the following remain open:

\begin{renumerate}
\item
Can one prove similar theorems for linear recurrence relations
where the coefficients depend on $n$?
\item
Can one characterize the $\mc{L}^k$-Specker functions which satisfy
modular recurrence relations with constant coefficients for each modulus $m$,
i.e., is there some kind of a converse to Theorem \ref{th:mainSP}?
\item
Does Theorem \ref{th:mainSP} hold for $\TVOIMSOL^3$?
\end{renumerate}

Finally, for many-sorted $\mc{L}^k$-Specker functions studying both growth rate
and recurrence relations seems a promising topic of further research. 


\end{document}